\numberwithin{equation}{section}
\renewcommand{\P}{\mathbbm{P}}
\newcommand{\Ex}[1]{\mathbb{E}\left[#1\right]} 
\newcommand{\R}{\mathbbm{R}}
\newcommand{\N}{\mathbbm{N}}
\newcommand{\M}{\mathcal{M}}
\newcommand{\C}{\mathcal{C}}
\theoremstyle{plain}
\newtheorem{theorem}{Theorem}[section]
\newtheorem{corollary}[theorem]{Corollary}
\newtheorem{lemma}[theorem]{Lemma}
\newtheorem{definition}[theorem]{Definition}
\newtheorem{remark}[theorem]{\it Remark\/}
\renewcommand{\d}{{\rm d}}
\newcommand\ignore[1]{}
\newcommand{\pnorm}[2]{\left\|#2\right\|_{#1}}
\newcommand{\supnorm}[1]{\pnorm{\infty}{#1}}
\def\sC{\mathcal{C}}
\def\sF{\mathcal{F}}
\def\sK{\mathcal{K}}\def\sL{\mathcal{L}}
\def\sM{\mathcal{M}}\def\sO{\mathcal{O}}
\def\sQ{\mathcal{Q}}
\def\M{\mathcal{M}}
\def\C{\mathcal{C}}
\def\num{N}
\begin{document}
\title{Large deviations for interacting diffusions with path-dependent McKean-Vlasov limit}
\date{\today}
\author{Rangel Baldasso\footnote{Email: \ rangel.bal@gmail.com; \ Bar-Ilan University, 5290002, Ramat Gan, Israel}\and Alan Pereira\footnote{Email: \ alan.anderson.math@gmail.com; \ Federal University of Alagoas, Macei\'{o}, Alagoas, Brazil} \and Guilherme Reis\footnote{Email:  \ ghreis@impa.br; \ Federal University of Bahia, Salvador, Bahia, Brazil}}
\maketitle

\begin{abstract}
We consider a mean-field system of path-dependent stochastic interacting diffusions in random media over a finite time window. The interaction term is given as a function of the empirical measure and is allowed to be non-linear and path dependent. We prove that the sequence of empirical measures of the full trajectories satisfies a large deviation principle with explicit rate function. The minimizer of the rate function is characterized  as the path-dependent McKean-Vlasov diffusion associated to the system. As corollary, we obtain a strong law of large numbers for the sequence of empirical measures. The proof is based on a decoupling technique by associating to the system a convenient family of product measures. To illustrate, we apply our results for the delayed stochastic Kuramoto model  and for a SDE version of Galves-L\"ocherbach model.
\end{abstract}


\section{Introduction}
~
\par Systems of interacting diffusions subject to random media have attracted great attention in statistical physics and have proven to be a fruitful model for neuronal networks.
\par In this paper, we consider interacting diffusions $(\theta^{i,\omega})_{1\leq i \leq N}$ modeled by mean-field systems of It\^o stochastic differential equations (SDEs) subject to random media $(\omega^i)_{1\leq i\leq N}$
\[\d \theta^{i,\omega}(t) = f\big(t, \theta_t^{i,\omega}, L_{N}, \omega^{i}\big)\,\d t+ h\big(\omega^{i}\big)\,\d B^{i}(t), \, t \in [0,T],\]
where $(B^i)_{1\leq i\leq N}$ are i.i.d Brownian motions and $L_N$ is the double-layer empirical measure
\begin{equation}\label{eq:emp_measures_intro}
L_{N}(\theta^{\omega}_T, \vec\omega) = \frac{1}{N} \sum_{i=1}^{N} \delta_{( \theta^{i,\omega}_T, \, \omega^{i})}.
\end{equation}
Here $N$ is the size of the system and $\theta^{i,\omega}_T$ describes the path of particle $i$ in the time interval $[-\tau,T].$

While the interaction between particles takes place only on the drift term, the diffusive terms of the system of SDEs are allowed to depend on an external random field. Besides, we are able to consider path-dependent interactions, where each bit can depend on the entire past of the system. Also, we assume that the interaction is a function of the empirical measure that is bounded and can be non-linear. We define the model precisely in Section~\ref{sec:mean-field-model}.

\par Our goal here is two-fold. First, we perform a large deviations analysis for the double-layer empirical measure $L_N.$  In this case, we prove that the collection $L_{N}$ satisfies a large deviation principle with an explicit rate function.

\par Second, we study the collection of minimizers of the rate function, and prove that the evolution of a typical particle can be described by the solution of a path-dependent McKean-Vlasov equation.

\par Our main results are the following (see Section~\ref{sec:main}).
\begin{enumerate}
\item \label{t:ldp_intro} We prove an annealed large deviation principle for $L_N$ as $N\to \infty$ and obtain an explicit representation for the rate function $H$.
\item \label{t:mckean_vlasov_intro} We verify that the rate function $H$ has a unique minimizer given by a solution to the annealed path-dependent McKean-Vlasov diffusion.  This object is also characterized as the solution of the path-dependent McKean-Vlasov PDE.
\end{enumerate}
The results above are precisely stated as Theorems~\ref{t:ldp} and~\ref{thm:minimizer}, respectively. As an immediate corollary of the two previous statements, we obtain a law of large numbers for the empirical measures $(L_N)_{N \in \N}$, stating that they converge to the unique minimizer of the rate function $H$. In Subsection~\ref{subsec:examples}, we apply our results for the delayed stochastic Kuramoto model and for a SDE version of the Galves-L\"ocherbach model.

\par To the best of our knowledge, this is the first paper that proves large deviations for systems of interacting diffusions with interactions that depend on the past of the particle together with the past of the empirical measure.

\bigskip

\noindent \textbf{Related works.} There is a vast literature concerning large deviations for systems of interacting diffusions. An important example of such systems is when the interaction takes place instantaneously in the sense that the evolution of one diffusion at a certain time depends on other diffusions at the same instant of time. Certain attention was given to remove the mean-field assumption in order to consider more realistic interactions modeled by introducing random strengths or random graphs. We here give a partial review of some works and point the main differences between them and the model we consider here.

\par There are two main types of strategies to prove large deviations for the kind of systems we consider. The first one relies on finding good approximations of the original system. The other exploits the use of Stroock and Varadhan's martingale problem to the Markov process given by the solutions. Here, we use the first strategy.

\par The idea of using some comparison argument to control dependencies is already somewhat established. The nature of the approximation used varies according to the model considered. Budhiraja, Dupuis, and Fischer~\cite{bdf} consider controlled versions of the model and, under very general hypotheses, establish a large deviation principle for the empirical measure of the solution at time $t\in [0,T]$, with rate function that is given as a solution of a variational problem. In their model, they assume more relaxed conditions on the coefficients and allow the diffusion coefficient to also depend on the interaction. In \cite[Section~7.2]{bdf} they consider path-dependent SDEs with the stronger assumption of instantaneous dependence on the empirical measure. Our contribution in that setting is to allow a full path-dependency and obtain a result which is valid for the empirical measure of the solution in the whole time interval $[-\tau,T]$ with an easily interpretable rate function. Also, we consider the diffusions defined in a random environment. Our approach requires more restrictive hypotheses on the coefficients, mainly due to the use of Girsanov's Theorem. For example, we are not able to consider interactions on the diffusive term and, in order to apply Novikov's Condition, we assume that the function modelling the drift interaction is bounded. 

\par Our approximation is closer to the one considered by Dawson and G{\"a}rtner~\cite{dg}, whose model is not path dependent. Their proof is a combination of both techniques mentioned above and do not apply to our case because we lose the Markov property by considering path-dependent diffusions.

\par An approach that resembles ours is used in Dai Pra and den Hollander~\cite{Pra1996}, where the authors derive a large deviation principle for Hamiltonian dynamics through the use of Varadhan's Lemma. In our case, the regularity conditions required by Varadhan's Lemma are not met and we need to directly compare the original system to its approximation. Also, they are in the setting of instantaneous interactions given by a linear function of the empirical measure. Meanwhile, our assumptions are more general, allowing any bounded Lipschitz  dependency on the empirical measure.

\par  Lu\c{c}on~\cite{lucon} combines the techniques from~\cite{Pra1996} and~\cite{dg} to derive a quenched large deviation principle for a Hamiltonian dynamics, when the media variables $\omega$ are fixed. In this case, the idea is again to derive the result via Varadhan's Lemma. The main difficulty is in establishing the large deviation principle for the decoupled model, since one cannot directly apply Sanov's Theorem for the quenched case. The techniques from~\cite{dg} come in hand when proving a quenched version of Sanov's Theorem. We emphasize that we prove only an annealed large deviation principle. 

\par Cabana and Touboul~\cite{cabana_touboul_2018} analyze randomly connected neural networks via delayed interacting diffusions  with independent random interactions. It is a particular case of path dependence, similar to the spin-glass Langevin dynamics considered in works as Ben Arous and Guionnet~\cite{bag} and Guionnet~\cite{guionnet}. We remark that the dependence considered in~\cite{cabana_touboul_2018} is not on the entire past trajectory of the process, but rather on a delayed time which is given by a bounded Lipschitz function (depending on random media variables).

\par Let us shortly hightlight a few works that treat path-dependent McKean-Vlasov equations under different light. Assuming Lipschitz coefficients, Huang~\cite{huang} proves the existence and uniqueness of strong solutions for such equations. Mehri, Scheutzow, Stannat, and Zangeneh~\cite{mssz} consider the more general case that includes jumps and prove the existence of strong solutions and propagation of chaos results under general assumptions of the functions controlling the interactions.

\par Even though we are not aware of works that consider path-dependent systems on other graphs, let us briefly discuss some works that consider different underlying graphs restricted to usual intantaneous dependence. By generalizing the approach of~\cite{dg}, M\"{u}ller~\cite{muller} considers the case when the underlying dependence graph is given by a $d$-dimensional torus. Lu\c{c}on and Stannat~\cite{lucon2016, lucon2014} study convergence and fluctuations for similar models on the integer lattice, with decaying long range interactions.

\par When the underlying graph is chosen at random, Delattre, Giacomin and Lu\c{c}on~\cite{Delattre2016} establish bounds on the distance between the solutions of the system in the mean-field case and over the Erd\"{o}s-R\'{e}nyi random graph, provided the mean degree diverges with logarithmic speed. Without any condition on the velocity of divergence of the mean degree, Oliveira and Reis~\cite{Reis2018} provide large deviation estimates. Lacker, Ramanan and Wu~\cite{Kavita2019} and Oliveira, Reis and Stolerman~\cite{OliveiraReisStolerman2018} study the case of constant average degree and deduce convergence of the solutions to the properly defined model on the Galton-Watson random tree.

\bigskip

\noindent \textbf{Proof overview.} The proof of Statement~\ref{t:ldp_intro} is somewhat similar to the general approach proposed by Dai Pra and den Hollander~\cite{Pra1996}. Even though we are not able to apply Varadhan's Lemma, the rate function one guesses from this approach is still the correct one. In order to circumvent the new difficulties, we use a collection of reference product measures that are known to satisfy a large deviation principle.  These product measures appear naturally in our model as the solution of the uncoupled system.  

\par We draw intuition from the work of Cabana and Touboul \cite{cabana_touboul_2018} to find the  collection of local approximations to the original systems. The goal of these approximations is two-fold: first, they help us to control the intrinsic dependencies of the model. Second, since they are obtained as product measures, as a consequence of Sanov's Theorem, it is easily verified that each of them satisfies a large deviation principle with a relatively simple rate function. 

\par With these approximations in hand, we proceed to deduce the large deviation principle for the original process. There are two main steps one needs to verify (see Section~\ref{sec:fundamental_lemmas}). We first establish a relation between the rate function of the original model and the approximations (Lemma~\ref{lemma:rate_function}). Once this is done, we provide a comparison lemma that relates the Radon-Nykodim derivatives of the two models (Lemma~\ref{lemma:estimates}). Combining both results, we are able to conclude that the empirical measures~\eqref{eq:emp_measures_intro} satisfy a large deviation principle. We remark that both central lemmas rely on estimates  using convenient exponential martingales.

\par Let us now briefly turn our attention to the set of minimizers of the rate function $H,$ Statement~\ref{t:mckean_vlasov_intro}. Once we know Statement~\ref{t:ldp_intro}, we deduce that the rate function $H$ is good, the set of minimizers of $H$ is non-empty, and any minimizer $\mu$ satisfies $H(\mu)=0$.  To prove that any such minimizer is a solution of the annealed McKean-Vlasov equation, we employ Lemma \ref{lemma:rate_function}  that characterizes the rate function of Statement~\ref{t:ldp_intro}. Uniqueness follows from an application of Banach's fixed-point Theorem.

\bigskip

\noindent \textbf{Organization of the paper.} In Section~\ref{sub:preliminaries}, we introduce notations and collect some classical results we will use during the rest of the paper. In Section~\ref{sec:mean-field-model}, we introduce the model we consider in its full generality, and state the complete versions of Statements~\ref{t:ldp_intro} and~\ref{t:mckean_vlasov_intro} in Section~\ref{sec:main}. Section~\ref{sec:fundamental_lemmas} contains the statement of the two main lemmas used in the proof of our results. The proof of these two lemmas are split into Sections~\ref{subsec:rate_function} and~\ref{sec:fundamental_estimates}. In Section~\ref{sec:proof_wldp}, we conclude the proof of the large deviation principle. The proof of Statement~\ref{t:mckean_vlasov_intro} is the content of Section~\ref{sec:minimizer}.

\bigskip

\noindent \textbf{Acknowledgments.} The authors thank Milton Jara and Roberto Oliveira for fruitful discussions during the elaboration of this work. RB is supported by the Israel Science Foundation through grant 575/16 and by the German Israeli Foundation through grant I-1363-304.6/2016. AP was partially supported by Capes/PNPD fellowship 88882.315944/2019-01. GR is supported by a Capes/PNPD fellowship 888887.313738/2019-00. The authors thank IMPA for hospitality and financial support in the early stages of the work.

\section{Preliminaries}\label{sub:preliminaries}
~
\par In this section we fix notation and briefly review some important concepts that will be used in the reminder of the text.

\subsection{Notation}
~
\par Throughout the text, let $\N$ denote the set of nonnegative integers. For $n\in\N\backslash\{0\}$, we write $[n]:=\{1,\dots,n\}$.

\par Let $(S,{\rm d})$ be a Polish space. We write $C(S)$ for the set of bounded continuous functions $\phi:S\to \R$ endowed with the uniform norm $\pnorm{\infty}{\phi} :=\sup_{x\in S}|\phi(x)|$.

\par For a Lipschitz function $\phi:S \to \R$, let
\begin{equation}
\pnorm{\text{Lip}}{\phi} :=\sup_{x,y\in S: x \neq y}\frac{|\phi(x)-\phi(y)|}{{\rm d}(x,y)}
\end{equation}
denote the Lipschitz constant of $\phi$. If $\phi$ is bounded and Lipschitz we define its BL-norm by $\pnorm{\text{BL}}{\phi}=\supnorm{\phi}+\pnorm{\text{Lip}}{\phi}.$

\par For the special case that $S\subset \R$ with $S=[a,b]$, we write $C([a,b])=\sC_{a}^{b}$ and, to avoid confusion when dealing with different intervals, we sometimes write $\pnorm{[a,b]}{\phi}$ for the norm of a function $\phi$ in $\sC_{a}^{b}$.

\par Given an element $\phi \in \sC_{a}^{b}$ and $ t \in [a,b]$, we denote by $\phi_{t}$ the restriction of $\phi$ to $[a,t]$ and by $\phi(t)$ the evaluation of $\phi$ at $t$.

\par If $(S, \d_{S})$ and $(\tilde{S}, \d_{\tilde{S}})$ are Polish spaces, unless otherwise stated, we endow the space $S \times \tilde{S}$ with the metric
\begin{equation*}
\d\big((s_{1}, \tilde{s}_{1}), (s_{2}, \tilde{s}_{2}) \big) = \d_{S}(s_{1}, s_{2}) + \d_{\tilde{S}}(\tilde{s}_{1}, \tilde{s}_{2}).
\end{equation*}
\par Let $\M_{1}(S)$ denote the set of probability measures over (the Borel sets of) $S$. If $X \in S$ is a random element, we denote $\delta_X \in \M_{1}(S)$ the Dirac delta measure at $X$, which can be seen as a random measure in $\M_{1}(S)$. Given a measure $\mu \in \M_{1}(S)$, we write $X \sim \mu$ if $X$ has distribution $\mu$.

The topology of weak convergence in $\sM_1(S)$ is metrized by the bounded-Lipschitz metric defined for $\mu,\nu\in\sM_1(S)$ as
\[{\rm d}_{BL}(\mu,\nu):= \sup\left\{\int \phi \, \d\mu-\int \phi \, \d\nu \,:\, \phi:S\to  \R,\, \pnorm{BL}{\phi}\leq 1\right\}.\]
A proof of this fact can be found in~\cite[Section 8.3]{bogachev}.

If $X$ and $Y$ are random elements in $S$ defined on the same probability space and $X\sim \mu$ and $Y\sim \nu$ then
\begin{eqnarray} \label{eq:boundonBL} 
{\rm d}_{BL}(\mu,\nu)\leq \Ex{\min\{{\rm d}(X,Y),\,2\}}.
\end{eqnarray}

\par For two probability measures $\mu, \nu \in \M_{1}(S)$, the relative entropy between $\mu$ and $\nu$ is the quantity
\begin{equation}\label{eq:relative_entropy}
I(\mu|\nu) = \sup \left\{ \int \phi \,\d \mu - \log \int \exp \{ \phi \} \,\d \nu \right\},
\end{equation}
where the supremum above is taken over all bounded functions $\phi:S \to \R$.

\par Finally, let us collect some facts about the relative entropy. We refer the reader to the Appendix of~\cite{kipnis_landim} for a more detailed description and proofs.

\par By considering constant functions, we have $I(\mu|\nu) \geq 0$. Besides, if $I(\mu|\nu)=0$, then $\mu=\nu$. However, the relative entropy between two probability measures is not always finite. In fact, $I(\mu|\nu)$ is finite if, and only if, $\mu$ is absolutely continuous with respect to $\nu$.

\begin{lemma}[Entropy Inequality] \label{lemma:entropy-inequality}
For any measurable function $g:S \to \R$ that is either $\mu$-integrable or bounded from above or below, we have
\begin{equation}\label{eq:entropy_inequality}
\int g \,\d \mu \leq I(\mu|\nu)+ \log \int \exp \{ g \} \,\d \nu,
\end{equation}
\end{lemma}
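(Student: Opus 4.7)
The plan is first to obtain~\eqref{eq:entropy_inequality} for bounded measurable $g$ as an immediate consequence of the definition~\eqref{eq:relative_entropy}: taking $\phi=g$ in the supremum there and rearranging gives the claim. The remainder of the proof consists of an approximation argument extending this to the three classes of test functions listed in the statement.

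Suppose first that $g\leq C$ is bounded from above, and set $g_{n}:=g\vee(-n)$, so that $g_{n}$ is bounded and $g_{n}\downarrow g$ pointwise. Applying the bounded case to each $g_{n}$, I would pass to the limit as follows: on the right-hand side, $e^{g_{n}}\leq e^{C}\in L^{1}(\nu)$, so dominated convergence together with continuity of $\log$ on $(0,\infty)$ yields $\log\int e^{g_{n}}\,\d\nu\to\log\int e^{g}\,\d\nu$; on the left, the non-negative increasing sequence $C-g_{n}$ allows monotone convergence, so $\int g_{n}\,\d\mu\to\int g\,\d\mu\in[-\infty,C]$. The case of $g$ bounded from below is symmetric, using $g_{n}:=g\wedge n$ and monotone convergence on both sides (the case $\int e^{g}\,\d\nu=+\infty$ being trivial). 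Finally, for $g$ that is $\mu$-integrable, I would apply the bounded-above case just established to $g_{n}:=g\wedge n$: since $g_{n}\uparrow g$ with $|g_{n}|\leq|g|\in L^{1}(\mu)$, dominated convergence handles the left-hand side while monotone convergence together with continuity of $\log$ handles the right-hand side, giving~\eqref{eq:entropy_inequality} in the limit.

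There is no genuine obstacle in this proof; the only delicate point is bookkeeping. One must verify in each of the three cases that the limiting integrals exist in $[-\infty,+\infty]$ and that~\eqref{eq:entropy_inequality} remains meaningful under the conventions $\log 0=-\infty$ and $\log(+\infty)=+\infty$. The three hypotheses on $g$ are precisely those that guarantee at least one side of the inequality is well-behaved after truncation, making the passage to the limit valid in every case.
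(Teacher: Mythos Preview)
Your proof is correct and follows essentially the same approach as the paper: obtain the bounded case directly from the definition of $I(\mu|\nu)$, then extend by truncation and limit arguments. Your version is more explicit about which convergence theorem applies to each side of the inequality, whereas the paper summarizes this in a single sentence invoking dominated and monotone convergence.
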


\begin{proof}
Directly from the definition, one obtains~\eqref{eq:entropy_inequality} for any bounded function $g:S \to \R$. The monotone convergence theorem can be applied to conclude that the inequality above also holds for measurable functions that are bounded only from above or below. It remains to verify the claim for the case when $g$ is $\mu$-integrable. In this case, let $g_{M}=\min\{g, M\}$ and observe that
\begin{equation*}
\int g_{M} \,\d \mu \leq I(\mu|\nu)+ \log \int \exp \{ g_{M} \} \,\d \nu.
\end{equation*}
We now use the dominated convergence theorem to justify that $\int g_{M} \, \d \mu \to \int g \, \d \mu$ and the monotone convergence theorem to conclude that $\int \exp \{ g_{M} \} \,\d \nu \to \int \exp \{ g \} \,\d \nu$, as $M$ grows. This concludes the proof.
\end{proof}

\subsection{Metrics on $\sC_{-\tau}^{t}\times \R^{d}$}\label{subsec:metrics}
~
\par We will usually work with objects that take values on the space of measures $\M_{1}\left(\sC_{-\tau}^{t} \times \R^{d} \right)$, where $\tau>0$ is a fixed constant and $t\in [0,T]$. For this reason, it will be useful to define a proper metric in this space.

\par On $\sC_{-\tau}^{t} \times \R^{d}$, we consider the metric given by
\begin{equation}\label{eq:metric_product}
\text{dist}^{t}\big((x_{t},\omega),(y_{t},\bar{\omega})\big) = \pnorm{[-\tau, t]}{x_{t}-y_{t}}+\pnorm{2}{\omega-\bar{\omega}},
\end{equation}
where $\pnorm{2}{\cdot}$ denotes the usual Euclidean metric on $\R^{d}$.

\par Given this metric, for two measures $\mu$ and $\nu$ on $\M_{1}(\sC_{-\tau}^{t} \times \R^{d})$ we write
$\d^t_{\text{BL}}(\mu,\nu)$
for the BL-distance depending on $t$.

\par Throughout the text, we use an abuse of notation whenever considering the push-forward of the measures by the projection map $\pi_{t}:\sC_{-\tau}^{T} \times \R^{d} \to \sC_{-\tau}^{t} \times \R^{d}$. For $\mu, \nu \in \sM_{1}(\sC_{-\tau}^{T} \times \R^{d})$, we will write
\begin{equation}\label{eq:metric_measures}
\d^{t}_{\text{BL}}\left(\mu, \nu\right) = \d^{t}_{\text{BL}}\left(\mu\circ \pi_{t}^{-1}, \nu \circ \pi_{t}^{-1}\right).
\end{equation}

\begin{remark}\label{remark:metric}
\rm Our techniques still hold if one chooses to replace the metrics $\text{dist}^{t}$ in~\eqref{eq:metric_product} with any other collection of metrics $\d^{t}$ that still make $\sC_{-\tau}^{t} \times \R^{d}$ a Polish space and such that $t \mapsto \d^{t}\big((x_{t}, \omega), (y_{t}, \tilde{\omega})\big)$ is non-decreasing, for any two functions $x, y \in \sC_{-\tau}^{T}$ and two vectors $\omega, \tilde{\omega} \in \R^{d}$. This might be useful when considering different types of interactions in our general model, as we shall see in Subsection~\ref{subsec:examples}.
\end{remark}

\subsection{Large deviation principle}\label{subsec:ldp}
~
\par We recall the definition of large deviation principle (LDP), in its weak and strong forms, and state a classical result that shows that a weak LDP together with exponential tightness implies a strong LDP  (cf.~\cite[Section~1.2]{dembo2009large}). 

Since all LDP considered in this work will hold with speed $N$ we will omit this information.

\begin{definition}
A function  $I:S \to [0,\infty]$ is called a rate function if  $I \not \equiv \infty$ and $I$ is a lower semicontinuous map. A rate function is said good if the level sets $I^{-1}([0,a])$ are compact.
\end{definition}

\begin{definition}[Large deviation principle]\label{def:LDP} A sequence $\{\P_N\}_{N\in\N}$ of probabilities over $S$  satisfies a large deviation principle (LDP) with rate function $I$ if

\begin{enumerate}
\item (Lower bound) For any open set $\sO \subset S$,
\begin{equation*}
\displaystyle\liminf_{N \to \infty}\dfrac{1}{N} \log  \P_N(\sO) \geq -\inf_{x \in \sO}I(x);
\end{equation*}

\item (Upper bound) For any closed set $\sC \subset S$,
\begin{equation*}
\displaystyle\limsup_{N \to \infty}\dfrac{1}{N}\log  \P_N(\sC) \leq -\inf_{x \in \sC}I(x).
\end{equation*}
\end{enumerate}
\end{definition}

\begin{definition}[Weak large deviation principle]\label{def:WLDP}
A sequence  $\{\P_N\}_{N\in\N}$ of probabilities over $S$ satisfies a weak large deviation principle (wLDP) with rate function $I$ if it satisfies the lower bound in Definition~\ref{def:LDP} for open sets and the upper bound for all compact sets $\sK \subset S$.
\end{definition}

\begin{definition}
A sequence of probability measures $\{\P_N\}_{N\in\N}$ on $S$ is exponentially tight if, for every $\alpha < \infty$, there exists a compact set $K_{\alpha} \subset S$ such that
\begin{equation*}
\limsup_{N\to\infty}\frac{1}{N} \log \mathbb{P}_N\left(K_{\alpha}^{\complement}\right) < -\alpha.
\end{equation*}
\end{definition}

\par The importance of exponential tightness lies in the following result, that says that, in order to prove a LDP for an exponentially tight family of probabilities, it suffices to conclude a wLDP.

\begin{theorem}\label{thm:wLDP+expt=LDP}
If an exponentially tight sequence of probability measures satisfies a wLDP with a rate function $I$, then $I$ is a good rate function and the LDP holds.
\end{theorem}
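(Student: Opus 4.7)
The plan is to verify the two conclusions separately: first that $I$ has compact level sets, and second that the wLDP upper bound extends from compact sets to arbitrary closed sets. Both arguments exploit exponential tightness in essentially the same way, so a single family $\{K_\alpha\}_{\alpha>0}$ of compacts supplied by exponential tightness will do all the work.

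For the goodness of $I$, I would fix $a \geq 0$ and choose any $\alpha > a$. Since $K_\alpha$ is compact hence closed, its complement $K_\alpha^{\complement}$ is open, so the wLDP lower bound in Definition~\ref{def:WLDP} yields
\[
-\inf_{x \in K_\alpha^{\complement}} I(x) \leq \liminf_{N\to\infty} \tfrac{1}{N}\log \P_N(K_\alpha^{\complement}) \leq \limsup_{N\to\infty} \tfrac{1}{N}\log \P_N(K_\alpha^{\complement}) < -\alpha.
\]
Hence $I(x) > \alpha > a$ for every $x \notin K_\alpha$, so $I^{-1}([0,a]) \subset K_\alpha$. Since $I$ is lower semicontinuous, $I^{-1}([0,a])$ is closed, and being a closed subset of a compact set it is compact; thus $I$ is good.

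For the upper bound on an arbitrary closed $\sC \subset S$, I would write $\P_N(\sC) \leq \P_N(\sC \cap K_\alpha) + \P_N(K_\alpha^{\complement})$. The set $\sC \cap K_\alpha$ is compact, so the wLDP upper bound gives
\[
\limsup_{N\to\infty} \tfrac{1}{N}\log \P_N(\sC \cap K_\alpha) \leq -\inf_{x \in \sC \cap K_\alpha} I(x) \leq -\inf_{x \in \sC} I(x),
\]
while exponential tightness gives $\limsup_{N\to\infty} \tfrac{1}{N}\log \P_N(K_\alpha^{\complement}) < -\alpha$. Combining these with the elementary fact that $\limsup \tfrac{1}{N}\log(a_N + b_N) \leq \max\{\limsup \tfrac{1}{N}\log a_N,\, \limsup \tfrac{1}{N}\log b_N\}$ produces
\[
\limsup_{N\to\infty} \tfrac{1}{N}\log \P_N(\sC) \leq \max\!\left\{-\inf_{x\in \sC} I(x),\ -\alpha\right\},
\]
and letting $\alpha \to \infty$ closes the argument.

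There is no genuine obstacle: the only subtlety worth flagging is that the wLDP upper bound is valid only on compact sets, so it cannot be used directly to estimate $\P_N(K_\alpha^{\complement})$. That piece must be handled by exponential tightness, and it is precisely the freedom to send $\alpha$ to infinity that recovers the full closed-set upper bound. Both steps are entirely soft once the two hypotheses are invoked.
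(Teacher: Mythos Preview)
Your argument is correct and is the standard textbook proof. Note, however, that the paper does not actually prove this theorem: it is stated as a classical result with a reference to~\cite[Section~1.2]{dembo2009large}, so there is no paper proof to compare against. Your write-up matches the usual argument found there.
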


\section{The mean-field model and related objects}\label{sec:mean-field-model}
~
\par In this section we fully specify the interacting diffusion model we will consider. We begin with some definitions that we will use, and introduce the complete model. In the next subsection, we list all the technical assumptions we make in order to prove our theorem and finish the section by introducing the decoupling family and precisely defining the path-dependent McKean-Vlasov diffusions.

\subsection{Definition}\label{subsec:definition}
~
\par In order to precisely define the model, we first introduce some notation. Throughout the text, $T >0$ is a fixed time horizon and $\tau>0$ is a finite constant that bounds how much dependence of the past one can have at time zero. Consider also the following objects.
\begin{enumerate}
\item A probability distribution $\mu_{0}$ over $\C_{-\tau}^{0}$, for the {\em initial states of the diffusions}.
\item A probability distribution $\mu_{\rm med}$ over $\R^d$, for the {\em media variables}.
\item A function $f:[0,T] \times \sC_{-\tau}^{T} \times \sM_{1}\left( \sC_{-\tau}^{T} \times \R^{d}\right) \times \R^{d} \to \R$ that determines {\em interactions between particles}. These terms will depend on the state of the diffusions and on their media variables.
\item A function $h:\R^d\to [0,+\infty)$ that determines the {\em single-particle diffusion term} in our interacting model. 
\end{enumerate}   

\par We postpone the introduction of the technical conditions we impose to these objects to the next subsection.

Start with a probability space together with a filtration $(\Omega,(\sF_t)_{t\in [-\tau,T]},\P)$. Let $\mathcal{W}$ denote the standard Wiener measure over $\sC_0^T$. To define our model for each $\num\in\N$, we assume that we have the following random variables defined in $(\Omega,(\sF_t)_{t\in [-\tau,T]},\P)$:
\begin{enumerate}
\item $\big(\xi_{0}^{i}\big)_{i\in [\num]} \sim {\mu_{0}}^{\otimes \num}$,
\item $\big(\omega^{i}\big)_{i\in [N]} \sim \mu_{\rm med}^{\otimes \num}$,
\item  $\big(B^{i}\big)_{i \in [\num]} \sim \mathcal{W}^{\otimes \num}$.
\end{enumerate}
As to measurability assumptions, we assume that the random variables $\xi_{0}^{i}$ are measurable with respect to $\sF_{0}$, that each $\omega^{i}$ is measurable with respect to $\sF_{-\tau}$, and that the collection of Brownian motions is adapted to the filtration $(\sF_t)_{t\in [0,T]}$. Furthermore, we also assume that the collections of random variables above are mutually independent. Notice also that, for each $i \in [N]$, $\xi_{0}^{i}:[-\tau,0] \to \R$ is a bounded continuous function.

\par The system of interacting diffusions we consider is given by the following definition. 

\begin{definition}\label{def:process} In the previous setting, let $N$ be fixed and consider a realization $\omega=\big(\omega^{i}\big)_{i\in [N]}$. We define the system of interacting diffusions $\theta^{\omega}=\left(\theta^{i,\omega}\right)_{i\in [N]}$ as the strong solution of the system of It\^o Stochastic Differential Equations (SDEs) given by, for $i \in [N]$,
\begin{equation}\label{eq:intdiff}
\begin{cases} \d \theta^{i,\omega}(t) = f\big(t, \theta^{i,\omega}_t, L_{N}, \omega^{i}\big)\,\d t+ h\big(\omega^{i}\big)\,\d B^{i}(t), & 0 \leq t \leq T,\\
\theta^{i,\omega}_{0}=\xi^{i}_{0},
\end{cases}
\end{equation}
where $L_{N}=L_{N}(\theta^{\omega}, \omega)$ is the (random) empirical measure on the space $\sC_{-\tau}^{T} \times \R^{d}$ given by
\begin{equation*}
L_{N}(\theta^{\omega}, \omega) = \frac{1}{N} \sum_{i=1}^{N} \delta_{\left( \theta^{i,\omega}, \, \omega^{i}\right)}.
\end{equation*}
\end{definition}

\par The double-layer empirical measure $L_{N}(\theta^{\omega}, \omega)$ introduced above is a measure on the space $\sC_{-\tau}^{T} \times \R^{d}$ and may be seen as a function of the solution of the system of SDEs~\eqref{eq:intdiff}.

\par We write $Q_{N}^{\omega}\in \M_1\big((\sC_{-\tau}^{T})^N\big)$ for the law of the solution of the system~\eqref{eq:intdiff} (taking into account the randomness of the initial condition) for a fixed collection of values $\omega=\big(\omega^{i}\big)_{i\in [N]}$. We define $Q_N\in \M_1\big((\sC_{0}^{T}\times \R^d)^N\big)$ through its expression on rectangles as
\begin{equation}
Q_N(A\times B)=\int_{B}Q_{N}^{\omega}(A) \, \d \mu_{\rm med}^{\otimes N}(\omega).
\end{equation}
We call $Q_N$ the annealed law and $Q_{N}^\omega$ the quenched law of $\theta^{\omega},$ respectively.

\par It is not always the case that the system~\eqref{eq:intdiff} has a strong solution. The hypotheses we assume on the functions $f$ and $h$ will imply this. We will also present examples of cases that fall under our hypotheses.

\begin{remark}
\rm The object we are interested in is $Q_N(L_N\in \cdot\,)$, which, for each $N \geq 1$, is an element of the space $\M_1\Big(\M_1\big(\sC_{-\tau}^{T}\times \R^d\big)\Big)$.
\end{remark}

\subsection{General assumptions}\label{subsec:assumptions}
~
\par In this subsection we list the collection of hypotheses we assume. In Subsection~\ref{subsec:examples}, we give examples of cases that fall under our assumptions.

\par A random measure $\nu \in \sM_1(\sC_{-\tau}^T\times \R^d)$ can be seen as a process $(\nu(t))_{t \in [-\tau,T]}$, where $\nu(t)$ is the push-forward through the canonical projection $\pi_{t}:\sC_{-\tau}^{T} \times \R^{d} \to \sC_{-\tau}^{t} \times \R^{d}$. We say that $\nu$ is adapted if it is measurable with respect to $\sF_t$, for all $t$.
 
\par About the function $f$, we assume that it is adapted in the sense that, for each pair of adapted random elements $(X,\nu)\in \sC_{-\tau}^T\times \sM_1(\sC_{-\tau}^T\times \R^d)$, the random variable $f(t,X,\nu,\omega)$ is measurable with respect to $\sF_t$ for any choice of $\omega \in \R^d.$   In the examples, it will be the case that the function $f$ will depend only on $X_t=X_{[-\tau,t]}$ and on $\nu_t=(\nu(s))_{s\in [-\tau,t]}.$ We will write $f(t,X_t,\nu,\omega)$ to emphasize this dependence.
\par Furthermore, we assume that, for fixed $t\in [0,T]$ and $\omega\in \R^d$, the function $f(t,\,\cdot\,,\,\cdot\,,\omega)$ is Lipschitz and the Lipschitz constant is uniform on $(t,\omega).$ More specifically, there exists a positive constant $\pnorm{SL}{f} \geq 0$ such that, for all $t \in [0,T]$ and $\omega \in \R^{d}$,
\begin{equation}	\label{equation:f-superlipschitz}
\left|f(t,x,\mu,\omega)-f(t,y,\nu,\omega)\right| \leq \pnorm{SL}{f} \left( \pnorm{[-\tau,t]}{x_{t}-y_{t}} + \d^{t}_{\text{BL}}\left(\mu, \nu\right)\right),
\end{equation}
where $\d^{t}_{\text{BL}}$ is the BL-distance introduced in~\eqref{eq:metric_measures}.  This constant $\pnorm{SL}{f}$ resembles the definition of the Lipschitz constant. However, this is not exactly the case, since, for each $t \in [0,T]$, we consider different metrics on the RHS of the equation above. If one thinks of $f$ as a family of functions indexed by $t \in [0, T]$, the condition above reduces to requiring that all functions are Lipschitz with uniformly bounded constant. We also assume that $f$ is a bounded function.

\par Regarding the function $h$, we assume it is a bounded Lipschitz function. Besides, we suppose that $h$ is uniformly lower bounded by a positive constant $h_{*}>0$.

Under these hypotheses, \cite[Chapter~IX, Theorem~2.4]{revuz_yor} implies that, for any $\omega=\big(\omega^{i}\big)_{i\in [N]}$ and initial conditions $(\xi^{i}_{0})_{i \in [N]}$, the system~\eqref{eq:intdiff} admits a unique strong solution $\theta^{\omega}$.

\subsection{Examples}\label{subsec:examples}

\subsubsection{Delayed interacting diffusions}
~
\par Our results relate naturally with a generalization of Kuramoto model, where delays are introduced and whose system is given by
\begin{equation}\label{eq:intdiff_ex}
\d \theta^{i,\omega}(t) = \frac{1}{N}\displaystyle\sum_{j=1}^N F\big(\theta^{i,\omega}(t),\;\theta^{j,\omega}(t-\bar{\tau}(\omega^i,\omega^j)),\;\omega^i,\;\omega^j\big)\,\d t + h\big(\omega^i\big)\,\d B^i(t),
\end{equation}
where $F:\R\times \R\times \R^d\times \R^d\to \R$, $h:\R^d\times\R^d\to [0,+\infty)$, and $\bar{\tau}:\R^d \times \R^d \to [0,\tau] $ are bounded Lipschitz functions. One can interpret the field $(\omega^{i})_{i \in [N]}$ as the positions of the particles, and the delay is a function of said positions.

\par We recover our original model by setting
\begin{equation}
f\big(t, x, \mu, \omega \big) = \int F \big(x(t), y(t-\tau(\omega, \sigma)), \omega, \sigma \big) \, \d \mu \big (y, \sigma).
\end{equation}

\par The delayed Kuramoto model is given by the choice
\begin{equation*}
F(x,y,\omega, \sigma) = \sin(x-y)+\omega,
\end{equation*}
and our result applies when one assumes that $\omega$ has a compactly supported distribution.

\par Let us now verify that the function $f$ defined above satisfies the conditions required in Subsection~\ref{subsec:assumptions}. We employ Remark~\ref{remark:metric}, and use the metric
\begin{equation}\label{eq:metric_1}
\begin{split}
d_{K,t}((x,\omega), &(x', \omega'))^{2} = |\omega-\omega'|^{2} \\
& +\sup\left\{|x(s)-x'(u)|: s, u \in [0,t] \text{ and } |s-u| \leq K|\omega-\omega'| \right\}^{2}.
\end{split}
\end{equation}
Cabana and Touboul~\cite{cabana_touboul_2018} prove in Remark 6 that each $d_{K,t}$ is, in fact, a distance and that $(\C_{-\tau}^t \times \R^d, d_{K,t})$ is a Polish space.

\par Fix two measures $\mu, \nu \in \M_1\big(\sC_{-\tau}^{T}\times \R^d\big)$, two functions $x, y \in \sC_{-\tau}^{T}$, a time $t \in [0,T],$ and a vector $\omega \in \R^d$. In order to verify Inequality~\eqref{equation:f-superlipschitz}, choose $K = \pnorm{\text{Lip}}{\tau}$. It is easy to see that the function $\phi(z_T,\sigma)=F\big(x(t),z(t-\tau(\omega,\sigma)), \omega, \sigma \big)$ is Lipschitz if we use the metric \eqref{eq:metric_1}. Then
\begin{align*}
&|f\big(t, x, \mu, \omega \big) -f\big(t, y, \nu, \omega \big)| \\ 
\leq& \bigg|\int F\big(x(t),z(t-\tau(\omega,\sigma)), \omega, \sigma \big)\d \mu(z_T,\sigma)\\
&
-\int F\big(y(t),z(t-\tau(\omega,\sigma)), \omega, \sigma \big)\d \nu(z_T,\sigma)\bigg|\\
=&\bigg|\int F\big(x(t),z(t-\tau(\omega,\sigma)), \omega, \sigma \big)\d (\mu-\nu)(z_T,\sigma)\\
&+\int \big[F\big(x(t),z(t-\tau(\omega,\sigma),\omega,\sigma \big)-F\big(y(t),z(t-\tau(\omega,\sigma)\big), \omega, \sigma \big)\big]\d \nu(z_T,\sigma)\bigg|.\\
\leq&   \pnorm{\text{BL}}{F} \Big(\d^{t}_{\text{BL}}\left(\mu, \nu \right)+ \pnorm{[-\tau,t]}{x-y}   \Big),
\end{align*}
as required.

\subsubsection{SDE mean-field version of Galves-L\"ocherbach model}
~
\par Inspired by the systems of interacting chains with memory of variable length (cf.~\cite{GL}), we consider
\[f(t,\theta^i,L_N,\omega^i)=f(t,L_N,\omega^i)=\frac{1}{N}\sum_{j=1}^N\int_{t-\tau(\omega^i)}^tg(t-s,\omega^j)\phi(\theta^j(s)) \, \d s,\]
where we make the following assumptions.
\begin{enumerate}
\item The media variables $(\omega^i)_{i\in [N]}$ can be parameters associated to $\theta^i$ such as position and other chemical properties.
\item The positive variable $\tau(\omega^i)$ depends on the random media $\omega^i$, and $t-\tau(\omega^i)$ models the time of the last spike of $\theta^i$ before time $t.$ The integral from $t-\tau(\omega^i)$ to $t$ says that we are considering only contributions of $\theta^j(s)$ to the evolution of $\theta^i(t)$ until the last spike of $\theta^i$ before time $t$.
\item $\phi:\R\to [0,1]$ is monotone increasing Lipschitz function.
\item The function $g:\R_+\times \R^d\to\R_+$ is bounded and continuous. This function can model the loss of the contribution of $\theta^j$ due to the time delay and depends on the media variable $\omega^j$.
\end{enumerate} 

We can define, for $\mu \in \sM_1(\sC_{-\tau}^T\times \R^d)$,
\[f(t,\mu,\omega)=\int_{t-\tau(\omega)}^t\int g(t-s,\sigma)\phi(y(s)) \, \d \mu(y_T,\sigma) \, \d s,\]
and it is easy to see that this function satisfies all assumptions of Subsection~\ref{subsec:assumptions}.

\begin{remark}\rm A more realistic variable to model spikes would be $\sL_t(\theta^i)=\sup\{s<t\,:\,\theta^i(s)\geq 1\}.$ In plain words, $\sL_t(\theta^i)$ is the last time the particle $\theta^i$ was above the threshold $1$ (similar to a spike). However our assumptions do not fit the use of $\sL_t(\theta^i)$ since this function is not continuous in time. We remark that this can be a good subject for future study.  
\end{remark}

\subsection{The decoupling family and McKean-Vlasov diffusions}
\label{subsec:decoupling}
~
\par In this subsection, we introduce the main tool we use to control dependencies of the solutions $\theta^{\omega}$ (see Definition~\ref{def:process}).

\par If the measures $Q_{N}$ could be written as product measures, it would be possible to apply Sanov's Theorem to conclude that $(Q_N(L_N \in \cdot\,))_{N\geq 1}$ satisfies a large deviation principle. Unfortunately, this is not the case and to surpass the dependencies of the model we will use an auxiliary family of probability measures, that we call the decoupling family,
\begin{equation}
(Q_\nu)_{\nu\in\sM_1(\sC_{-\tau}^T\times \R^d)}.
\end{equation}

\par Our main strategy is to locally compare $Q_{N}$ to $Q_{\nu}^{\otimes N}$, for suitable choices of $\nu \in \sM(\sC_{-\tau}^{T} \times \R^{d})$. The idea to construct $Q_{\nu}$ is to consider the SDE~\eqref{eq:intdiff} with $L_{N}$ replaced by $\nu$. For $\omega \in \R^{d}$, let $\psi^{\omega, \nu}$ be the solution of
\begin{multline}\label{eq:intdiff-nu}
\begin{cases} \d \psi^{\omega, \nu}(t) =f\big(t, \psi^{\omega, \nu}, \nu, \omega\big)\,\d t+ h\big(\omega\big)\,\d B(t), & 0 \leq t \leq T, \\
\psi^{\omega, \nu}_{0}=\xi_{0}.
\end{cases}
\end{multline}

\begin{definition}\label{def:decoupling}
For any fixed $\omega \in \R^d$, let $Q_\nu^\omega$ denote the law of $\psi^{\omega,\nu}$ and define the measure $Q_\nu \in \sM_1(\sC_{-\tau}^T\times\R^d)$ via its representation on rectangles $A \times B$ as
\begin{equation}\label{eq:decoupling}
Q_\nu(A\times B)=\int_{B}Q_\nu^\omega(A) \, \d \mu_{\rm med}(\omega).
\end{equation}
\end{definition}

\par To conclude this subsection, we define the annealed path-dependent McKean-Vlasov law using the decoupling family of Definition~\ref{def:decoupling}. Let us first provide some heuristics for the definition.

\par Assume that $L_{N}$ converges to the law of a random element $V=(V^{\omega}, \omega)$, whose distribution we denote by $\sL(V)$. At the same time, due to symmetries of the system, one can infer that all paths $(\theta^{i,\omega}, \omega)$ should be equally distributed, and that the limit law should be equal to the distributional limit of $L_{N}$, $\sL(V)$. From this, we can use Equation~\eqref{eq:intdiff} to write
\begin{equation}\label{eq:heuristics-mckeanvlasov}
\d V^{\omega}(t) = f\big(t, V^{\omega}, \sL(V), \omega\big) \,\d t + h(\omega)\,\d B(t).
\end{equation}
The heuristics above suggests that, if $L_N$ converges to $\sL(V)$, then $V^\omega$ satisfies~\eqref{eq:heuristics-mckeanvlasov}. In light of Definition~\ref{def:decoupling}, this means that $Q_{\sL(V)}=\sL(V).$ 
\begin{definition}\label{def:annealedMV}
The annealed path-dependent McKean-Vlasov law associated to the system of SDEs~\eqref{eq:intdiff} is the probability measure $\nu^*\in\sM_1(\sC_{-\tau}^{T}\times\R^d)$ such that
\begin{equation*}
Q_{\nu^*}=\nu^*.
\end{equation*}
\end{definition}

\par In particular, in the definition above, we have $\nu^{*} \circ \pi^{-1}_{0} = \mu_{0}$, where $\pi_{0}:\C_{-\tau}^{T}  \times \R^{d} \to \C_{-\tau}^{0}  \times \R^{d}$ is the canonical projection.

\par The existence of such a measure is not immediately clear. In Theorem~\ref{thm:minimizer} below, we prove its existence and uniqueness, and characterize it as weak solution of a  path-dependent McKean-Vlasov PDE and the unique minimizer of the rate function of the LDP satisfied by the family of probabilities $(Q_N(L_N \in \cdot\,))_{N\geq 1}$. 
\par

\subsection{Heuristics on the LDP and more definitions}
\label{sec:heuristics}
~
\par In this subsection we present a brief heuristics of how one pursuits the proof of a LDP for the sequence $Q_N(L_N\in \cdot\,)$. In parallel, we also motivate and define the rate function and related objects.

\par In order to prove a LDP for the sequence $Q_N(L_N\in \cdot\,)$, we will prove that this sequence is exponentially tight and that it satisfies a weak LDP (cf. Section~\ref{subsec:ldp}).

\par We will first prove the lower bound. The proof of the upper bound in the weak LDP follows a similar argument. In order to prove the lower bound, we need, for each fixed measure $\nu \in \sM_1(\sC_{-\tau}^{T} \times \R^{d})$ and $\delta>0$, to get an exponential lower bound for
\begin{equation}\label{eq:lbgoal}
Q_N(L_N\in B(\nu,\delta)).
\end{equation}
Recall, however, that Sanov's Theorem gives us a lower bound when we replace $Q_N$ by $Q_{\nu}^{\otimes N}$ in~\eqref{eq:lbgoal} above. Since we are considering the event where $L_{N}$ is close to $\nu$, one might expect that it is possible to compare the two probabilities and obtain the necessary bounds for~\eqref{eq:lbgoal}.

\par There are two main steps in formalizing the previous idea.

\par \textbf{First step:} Compare the probabilities $Q_N$ and $Q_\nu^{\otimes N}$, by considering their Radon-Nikodym derivatives with respect to a common reference measure.
\par \textbf{Second step:} Obtain the rate function for $Q_N(L_N \in \cdot\,)$ from the rate functions provided by Sanov's Theorem.

The common reference measure we will use is introduced in the following definition.
\begin{definition}\label{def:P}
For each $\omega \in  \R^{d}$, let $P^{\omega}$ denote the law of the unique strong solution of the SDE
\begin{equation}
\begin{cases} 
\d x^{\omega}(t)= h\big( \omega \big)\,\d B(t), & 0\leq t\leq T \\
x^{\omega}_{0}= \xi_{0}.
\end{cases}
\end{equation}
In other words, $P^{\omega}$ is characterized by the fact that, under $P^{\omega}$, $(x_{t}^{\omega})_{t \in [-\tau,0]}$ is distributed according to $\mu_{0}$ and, for $t \in [0,T]$, $B(t)=\frac{1}{h(\omega)}(x^{\omega}(t)-x^{\omega}(0))$ is a Brownian motion independent of $(x_{t}^{\omega})_{t \in [-\tau,0]}$. Let $P$ be the probability measure on $\sC_{-\tau}^{T} \times \R^d$ given on cylinders by
\begin{equation*}
P(A \times B) = \int_{B}P^{\omega}(A) \,\d\mu_{\rm med}(\omega).
\end{equation*}
\end{definition}

\par We will relate $Q_{\nu}^{\otimes N}$ to $P^{\otimes N}$ via Girsanov's Theorem. In order to do so, let
\begin{equation}\label{def:D}
D^{\nu,\omega}(y_{T}):=\frac{1}{h(\omega)^{2}}\int_{0}^{T}f(t,y_{t}, \nu, \omega)\,\d y(t)- \frac{1}{2h(\omega)^{2}}\int_{0}^{T}f(t, y_{t}, \nu, \omega)^{2} \,\d t.
\end{equation}
In Appendix~\ref{app:radon-nikodym}, we prove by applying Girsanov's Theorem that, almost surely with respect to $P$,
\begin{align}\label{eq:dQnu/dP}
\frac{\d Q_\nu^{\otimes N}}{\d P^{\otimes N}}\big(\vec{x}_{T}, \vec{\omega}\big)=\exp\Big(N\int D^{\nu,\omega}\big(y_{T}\big) \, L_N(\d y_{T}, \d \omega)\Big),
\end{align}
where $L_{N}= L_{N}\big(\vec{x}_{T}, \vec{\omega}\big)$ is the empirical measure of the vector $\big(\vec{x}_{T}, \vec{\omega}\big)$. We also have
\begin{equation}\label{eq:dQN/dP}
\frac{\d Q_N}{\d P^{\otimes N}}\big(\vec{x}_{T}, \vec{\omega}\big)=\exp\Big(N\int D^{L_N,\omega}\big(y_{T}\big) \, L_N( \d y_{T}, \d \omega) \Big).
\end{equation}

\par For the second step in the proof, one might draw inspiration from Varadhan's Lemma. Assume for a moment that the function
\begin{equation}\label{eq:intuition_1}
\mu \in \M_1(\sC_{-\tau}^T\times\R^d)\mapsto \int D^{\nu,\omega}\big(x_{T}\big) \, \d \mu(x_{T},\omega)
\end{equation}
is bounded and continuous. Under these hypotheses, a straightforward application of Varadhan's Lemma combined with~\eqref{eq:dQnu/dP} implies that the sequence of measures $\big(Q_\nu^{\otimes N}(L_N\in \cdot\,)\big)_{N\geq 1}$ satisfies a LDP with rate function
\begin{equation}\label{eq:varadhan_wrong}
\mu\mapsto I(\mu|P)-\int D^{\nu,\omega}\big(x_{T}\big) \, \d \mu(x_{T},\omega).
\end{equation}
Even though~\eqref{eq:intuition_1} is not bounded nor continuous, this intuition leads to the correct answer, as we shall see. In Lemma~\ref{lemma:rate_function}, we argue that the function
\begin{equation}\label{eq:gamma}
\Gamma_\nu(\mu)=\begin{cases}
\displaystyle\int D^{\nu,\omega}\big(x_{T}\big) \, \d \mu(x_{T},\omega), & \text{if } I(\mu|P) < \infty,\\
\infty, & \text{otherwise}
\end{cases}
\end{equation}
is well defined. With this in hands, we are able to prove in Lemma~\ref{lemma:rate_function} that the sequence $\big(Q_\nu^{\otimes N}(L_N\in \cdot\,)\big)_{N\geq 1}$ satisfies a LDP with rate function
\begin{equation}\label{eq:def_of_Hnu}
H_{\nu}(\mu)=\begin{cases}
I(\mu\vert P) - \Gamma_\nu(\mu), & \text{if } I(\mu\vert P) < \infty,\\
\infty, & \text{otherwise},
\end{cases}
\end{equation}
in alignment with Varadhan's Lemma.

\par A similar heuristics applies to the sequence $\big(Q_{N}(L_N\in \cdot\,)\big)_{N\geq 1}$ and we will prove that it satisfies a LDP with rate function $H(\mu)=H_{\mu}(\mu)$, for each $\mu \in \sM_{1}(\sC_{-\tau}^{T}\times \R^{d})$ (see Theorem~\ref{t:ldp}).

\section{Statement of main results}
\label{sec:main}
~
\par We are now in position to state our main results. Our main theorem states that the sequence of empirical measures of the system of SDEs~\eqref{eq:intdiff} satisfies a large deviation principle with a good rate function. We will prove that the rate function has a unique minimizer given by the annealed path-dependent McKean-Vlasov law of Definition~\ref{def:annealedMV}. As a simple consequence of Borel-Cantelli's Lemma, we also have a strong law of large numbers for the empirical measures.

\par Our first result is the following.
\begin{theorem}[Proof in Section~\ref{sec:proof_wldp}]\label{t:ldp} Under the assumptions of Subsection~\ref{subsec:assumptions}, the sequence $(Q_N(L_N \in \cdot\,))_{N \in \N}$ (cf. Subsection~\ref{subsec:definition}) satisfies a LDP with a good rate function given by
\begin{equation}\label{eq:rate_function}
H(\mu)=H_{\mu}(\mu).
\end{equation}
\end{theorem}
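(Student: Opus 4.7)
The plan is to invoke Theorem~\ref{thm:wLDP+expt=LDP} and reduce Theorem~\ref{t:ldp} to (a) exponential tightness of $(Q_N(L_N\in\cdot\,))_N$ and (b) a weak LDP with rate function $H$. For (a), the boundedness of $f$ together with Gaussian tail bounds on the modulus of continuity of the driving Brownian motions produces, at any prescribed exponential speed, compact subsets of $\sC_{-\tau}^T$ on which each $\theta^{i,\omega}$ concentrates; a Sanov-type argument then lifts this to exponential tightness of $L_N$ in $\sM_1(\sC_{-\tau}^T \times \R^d)$ (possibly under a mild tail assumption on $\mu_{\rm med}$).

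For (b) I would lean on the two lemmas stated in Section~\ref{sec:fundamental_lemmas}. Dividing \eqref{eq:dQN/dP} by \eqref{eq:dQnu/dP} gives
\begin{equation*}
\frac{\d Q_N}{\d Q_\nu^{\otimes N}}(\vec x_T,\vec\omega) = \exp\Big(N\int \bigl[D^{L_N,\omega}(y_T)-D^{\nu,\omega}(y_T)\bigr]\,L_N(\d y_T,\d\omega)\Big).
\end{equation*}
I expect Lemma~\ref{lemma:estimates} to produce some $\eta(\delta)\downarrow 0$ such that, on $\{L_N \in B(\nu,\delta)\}$, this density is trapped between $e^{-N\eta(\delta)}$ and $e^{N\eta(\delta)}$ uniformly in $\nu$ ranging over a bounded set; this is plausible because the integrand $D^{\mu,\omega}-D^{\nu,\omega}$ is Lipschitz in $\mu$ through \eqref{equation:f-superlipschitz}, modulo a martingale tail estimate for the stochastic part of $D$. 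Lemma~\ref{lemma:rate_function} in turn yields, for each fixed $\nu$, a LDP for $(Q_\nu^{\otimes N}(L_N\in\cdot\,))_N$ with rate $H_\nu$ from~\eqref{eq:def_of_Hnu} (morally a Sanov-plus-Varadhan statement applied to the i.i.d.\ reference $P$ twisted by $\Gamma_\nu$).

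Given these two inputs the two halves of the weak LDP follow routinely. For the lower bound, given open $\sO$ and $\nu\in\sO$ with $H(\nu)<\infty$, pick $\delta$ with $B(\nu,\delta)\subset\sO$ and use the density estimate to obtain $Q_N(L_N\in B(\nu,\delta))\geq e^{-N\eta(\delta)}Q_\nu^{\otimes N}(L_N\in B(\nu,\delta))$; Lemma~\ref{lemma:rate_function} gives $\liminf_N N^{-1}\log Q_N(L_N\in B(\nu,\delta))\geq -\eta(\delta)-H_\nu(\nu) = -\eta(\delta)-H(\nu)$, and sending $\delta\to 0$ and optimising over $\nu\in\sO$ closes it. For the upper bound on a compact $K$ and $\epsilon>0$, cover $K$ by finitely many balls $B(\nu_j,\delta_j)$ chosen so small that both $\eta(\delta_j)<\epsilon$ and $|H_{\nu_j}(\mu)-H_\mu(\mu)|<\epsilon$ for every $\mu\in\bar B(\nu_j,\delta_j)$ (the latter from the same Lipschitz control of $\Gamma_\nu$). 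Then $Q_N(L_N\in\bar B(\nu_j,\delta_j))\leq e^{N\eta(\delta_j)}Q_{\nu_j}^{\otimes N}(L_N\in\bar B(\nu_j,\delta_j))$ and Lemma~\ref{lemma:rate_function} together yield $\limsup_N N^{-1}\log Q_N(L_N\in K)\leq 2\epsilon-\inf_{\mu\in K}H(\mu)$; sending $\epsilon\to 0$ finishes.

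The main obstacle is the self-referential shape $H(\mu)=H_\mu(\mu)$: the available Sanov-based LDP is only for a frozen reference $\nu$, and one must pay the price of unfreezing $\nu$ to the running $L_N$ while keeping the bound at the exponential scale. This is exactly what Lemma~\ref{lemma:estimates} must deliver, and the delicate point is to do it \emph{uniformly} in $\nu$ so that the finite-ball covering in the upper bound closes. A secondary technical point is that both the continuity of $\nu\mapsto H_\nu(\mu)$ (needed to exchange $H_{\nu_j}$ for $H$) and the lower semicontinuity of $H$ itself have to be extracted from the Lipschitz estimate on $\Gamma_\nu$ near the effective domain $\{I(\,\cdot\,|P)<\infty\}$, which is the only place where $H$ is finite.
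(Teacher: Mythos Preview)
The overall architecture matches the paper's---reduce to exponential tightness plus a weak LDP via the decoupling family $(Q_\nu)_\nu$---but there is a genuine gap in how you handle the Radon--Nikodym derivative.

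You expect that on $\{L_N\in B(\nu,\delta)\}$ the density $\d Q_N/\d Q_\nu^{\otimes N}$ can be trapped \emph{pointwise} between $e^{\pm N\eta(\delta)}$. This fails. In the exponent, the Riemann-integral part $\int_0^T[f(\cdot,L_N,\cdot)^2-f(\cdot,\nu,\cdot)^2]\,\d t$ is indeed $O(\eta)$ uniformly on the event, but the stochastic-integral part
\[
\sum_{i=1}^N\frac{1}{h(\omega^i)^2}\int_0^T\bigl[f(t,x^i_t,L_N,\omega^i)-f(t,x^i_t,\nu,\omega^i)\bigr]\,\d x^i(t)
\]
has no pointwise bound whatsoever: under $P^{\otimes N}$ it is a martingale with quadratic variation of order $N\eta^2$, hence fluctuations of order $\sqrt{N}\,\eta$, not a deterministic bound of order $N\eta$. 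A ``martingale tail estimate'' only tells you the density stays within $e^{\pm N\eta}$ outside an exceptional set, and you are then back to controlling integrals over that set. The paper sidesteps this entirely by never seeking a pointwise sandwich: it writes, via H\"older with $a^{-1}+b^{-1}=1$,
\[
Q_\nu^{\otimes N}(L_N\in B(\nu,\eta))\le Q_N(L_N\in B(\nu,\eta))^{1/a}\Big[\int_{\{L_N\in B(\nu,\eta)\}}\Big(\frac{\d Q_\nu^{\otimes N}}{\d Q_N}\Big)^{b}\d Q_N\Big]^{1/b},
\]
and similarly for the upper bound. What Lemma~\ref{lemma:estimates} actually delivers (Item~\ref{item:RN2nd}) is a bound on these \emph{moments}, exploiting precisely the exponential-martingale structure of the stochastic integral; one then sends $\eta\to 0$ and $b\to\infty$ in that order to remove the residual terms.

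A second, smaller issue: in the compact upper bound you require $|H_{\nu_j}(\mu)-H_\mu(\mu)|<\epsilon$ uniformly for $\mu\in\bar B(\nu_j,\delta_j)$. Item~\ref{lemma:perturbation} of Lemma~\ref{lemma:rate_function} only gives $|H_\nu(\mu)-H_\mu(\mu)|\le c\,(I(\mu|P)+1)\,\d_{\rm BL}^T(\mu,\nu)$, and $I(\mu|P)$ is not bounded on balls. The paper keeps this factor explicit and splits into cases: if $\inf_{\mu\in\sK}I(\mu|P)=\infty$ then also $\inf_{\sK}H=\infty$ and the bound is vacuous; otherwise the extra term is finite and vanishes as $\eta\to 0$. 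Finally, on exponential tightness the paper's route is shorter than the one you sketch: it transfers tightness directly from $P^{\otimes N}$ via Cauchy--Schwarz and Item~\ref{estimate:exptight} of Lemma~\ref{lemma:estimates}, with no modulus-of-continuity computation.
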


\par As mentioned in Subsection~\ref{subsec:decoupling}, the annealed path-dependent McKean-Vlasov law is the natural candidate for the limit of the measures $L_N$. The next theorem fully characterizes the minimizers of $H$ and, in particular, we obtain a strong law of large numbers  for $L_{N}$ (cf. Corollary~\ref{cor:SLLN}).

\begin{theorem}[Proof in Section~\ref{sec:minimizer}]\label{thm:minimizer}
In the settings of Theorem~\ref{t:ldp}, the rate function $H$ has a unique minimizer given by the annealed path-dependent McKean-Vlasov diffusion $\nu^*$ of Definition~\ref{def:annealedMV}. Furthermore, the marginal of $\nu^{*,\omega}$ at time $t\in [0,T]$, $\nu^{*,\omega}(t),$ satisfies, for any bounded function $\phi:\R\to\R$ with continuous bounded derivatives up to order two, (see Remark \ref{r1})
\begin{equation}\label{eq:mvpde}
\int_{\R}\phi(u)(\nu^{*,\omega}(t)-\nu^{*,\omega}(0))(\d u)=
\int_0^t\int_{\sC_{-\tau}^T}L_{\nu^{*,\omega}}(\phi)(s,x_T)\nu^{*,\omega}(\d x_T) \, \d s,
\end{equation}
where the differential operator $L_{\nu^{*,\omega}}$ is defined  by
\[L_{\nu^{*,\omega}}(\phi)(t,x_T)=f(t,x_t,\nu^*,\omega)\phi'(x(t))+\frac{h(\omega)^2}{2}\phi''(x(t)).\]
\end{theorem}
\begin{remark}\label{r1}\rm Equation~\eqref{eq:mvpde} can be seen as the weak formulation of the coupled path-dependent McKean-Vlasov PDEs \begin{equation}\label{mvpde}
\begin{cases}
\partial_t\nu^{*,\omega}(t)=L_{\nu^{*,\omega}}^*\nu^{*,\omega}\\
\nu^{*,\omega}_0=\mu_0.
\end{cases} 
\end{equation}
\end{remark}

\begin{corollary}[Strong law of large numbers, proof omitted] \label{cor:SLLN}
If one couples the measures $Q_N(L_N\in \cdot\,)$ in the same probability space $(\P, \Omega,\sF)$. Then, for any bounded continuous function $F: \sC_{-\tau}^T\times\R^d \to \R$,  the following holds $\P-$almost surely
\begin{equation}
\lim_{N\to\infty} \int F(x_{T}, \omega) \, \d L_{N} = \int F(x_{T},\omega)\, \d \nu^*.
\end{equation}
\end{corollary}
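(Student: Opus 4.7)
The plan is to combine the LDP of Theorem~\ref{t:ldp}, the uniqueness and vanishing of the minimizer from Theorem~\ref{thm:minimizer}, and the Borel--Cantelli lemma along a countable family of shrinking neighbourhoods of $\nu^*$. Since the Bounded-Lipschitz metric $\d_{\text{BL}}$ metrizes weak convergence on $\sM_1(\sC_{-\tau}^T\times \R^d)$, and the integral of any bounded continuous $F$ is continuous with respect to weak convergence, it is enough to show that $L_N \to \nu^*$ weakly $\P$-a.s.

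For each fixed $\varepsilon>0$, consider the closed set $C_\varepsilon:=\{\mu\in\sM_1(\sC_{-\tau}^T\times \R^d)\,:\,\d_{\text{BL}}(\mu,\nu^*)\geq \varepsilon\}$. I claim that $\delta_\varepsilon:=\inf_{\mu\in C_\varepsilon}H(\mu)>0$. Indeed, by Theorem~\ref{t:ldp} the rate function $H$ is good, so its sublevel sets are compact. If $\delta_\varepsilon=0$, a minimizing sequence $(\mu_n)\subset C_\varepsilon$ with $H(\mu_n)\to 0$ would eventually lie in the compact set $\{H\leq 1\}$, hence admit a subsequence converging to some $\mu^\star$. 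Lower semicontinuity of $H$ gives $H(\mu^\star)=0$, and Theorem~\ref{thm:minimizer} forces $\mu^\star=\nu^*$. But $C_\varepsilon$ is closed, so $\mu^\star\in C_\varepsilon$, contradicting $\d_{\text{BL}}(\nu^*,\nu^*)=0$.

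Applying the upper bound in Definition~\ref{def:LDP} to the closed set $C_\varepsilon$ yields
\begin{equation*}
\limsup_{N\to\infty}\frac{1}{N}\log Q_N(L_N\in C_\varepsilon)\leq -\delta_\varepsilon,
\end{equation*}
so that for $N$ large enough one has $Q_N(L_N\in C_\varepsilon)\leq \exp(-N\delta_\varepsilon/2)$. In particular $\sum_N \P(\d_{\text{BL}}(L_N,\nu^*)\geq \varepsilon)<\infty$, and the first Borel--Cantelli lemma gives $\P$-a.s.\ that $\d_{\text{BL}}(L_N,\nu^*)<\varepsilon$ for all but finitely many $N$. Intersecting the full-measure events obtained by choosing $\varepsilon=1/k$, $k\in\N$, shows $\d_{\text{BL}}(L_N,\nu^*)\to 0$ $\P$-a.s., and hence $\int F\,\d L_N\to \int F\,\d\nu^*$ $\P$-a.s.\ for every bounded continuous $F:\sC_{-\tau}^T\times\R^d\to\R$.

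There is no genuine obstacle here; the only subtlety is establishing the strict positivity $\delta_\varepsilon>0$, which is a standard consequence of the goodness of $H$ and the uniqueness of its minimizer provided by Theorem~\ref{thm:minimizer}. Everything else is a routine Borel--Cantelli argument, which is presumably why the authors chose to omit the proof.
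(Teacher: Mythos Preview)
Your argument is correct and is precisely the approach the paper has in mind: the authors explicitly remark that the corollary is ``a simple consequence of Borel--Cantelli's Lemma,'' and your use of the LDP upper bound together with the goodness of $H$ and the uniqueness of its minimizer to obtain $\delta_\varepsilon>0$ is exactly the standard way to carry this out.
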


\subsection{Overview of the proofs}
~
\par Let us now briefly describe the proofs of Theorems~\ref{t:ldp} and~\ref{thm:minimizer}.

\par The more demanding result is the full LDP of Theorem~\ref{t:ldp}. According to Theorem~\ref{thm:wLDP+expt=LDP}, in order to conclude Theorem~\ref{t:ldp} it suffices to verify a weak LDP and exponential tightness of the sequence
\begin{equation}\label{eq:seqQN}
\big(Q_N(L_N \in \cdot\,)\big)_{N\in\N}.
\end{equation}

\par We verify exponential tightness in Subsection~\ref{sec:exptight}. It relies on the exponential tightness of the sequence~$\big(P^{\otimes N}(L_N\in \cdot\,)\big)_{N\in\N}$ (cf. Definition~\ref{def:P}) and some estimates on the Radon-Nikodym derivative of $Q_{N}$ with respect to $P^{\otimes N}$.

\par In order to prove the weak LDP, we begin by providing bounds on the difference $|H_{\mu}(\mu)-H_{\nu}(\mu)|$, when $\mu$ and $\nu$ are close enough (see Lemma~\ref{lemma:rate_function}). Once this is done, the next step is to estimate the Radon-Nikodym derivative of $Q_{N}(L_{N} \in \cdot\,)$ in a neighborhood of a given measure $\nu \in \sM_{1}\left(\sC_{-\tau}^{T} \times \R^{d}\right)$ in terms of the Radon-Nikodym derivative of $Q_{\nu}^{\otimes N}(L_{N} \in \cdot \,)$ (see Lemma~\ref{lemma:estimates}). From these two main steps, we can conclude the weak LDP.

\par Theorem~\ref{thm:minimizer} has a shorter proof presented in Section~\ref{sec:minimizer}. The main idea is to observe that each minimizer $\mu\in\sM_1(\sC_{-\tau}^{T} \times \R^{d})$ of $H$ is a fixed point of the map $\nu \mapsto Q_{\nu}$ from Definition~\ref{def:decoupling}. In particular, the existence of minimizers implies the existence of path-dependent McKean-Vlasov diffusions. To conclude uniqueness, it suffices to verify that the map $\nu\mapsto Q_{\nu}$ has a unique fixed point, which will be a consequence of Banach's fixed-point Theorem. The map $\nu \mapsto Q_{\nu}$ is not by itself a contraction, but we are able to verify that a sufficiently large iteration of it is, implying the uniqueness in the statement. The PDE characterization is a simple consequence of It\^o's Formula.

\section{Fundamental Lemmas}\label{sec:fundamental_lemmas}
~
\par In this section we state the main lemmas we need to prove weak LDP and exponential tightness for the sequence $(Q_N(L_N\in \cdot\,))_{N \in \N}$. Recall the motivations and definitions presented in Subsection~\ref{sec:heuristics}.

\par In the first lemma, fixed $\nu\in \sM_1(\sC_{-\tau}^T\times \R^d)$, we provide a different expression for the rate function associated to the sequence $(Q_\nu^{\otimes N}(L_N\in \cdot\,))_{N \in \N}$. After this, we relate the new expression to the candidate rate function for the sequence $(Q_N(L_N\in \cdot\,))_{N \in \N}$.

\begin{lemma}[Rate-function Lemma]\label{lemma:rate_function} Let $\mu,\nu \in \M_{1}(\C_{-\tau}^{T} \times \R^{d})$. Then the following holds.
\begin{enumerate}
\item \label{prop:gamma_finite} If either $I(\mu|P)< \infty$ or $I(\mu|Q_\nu)< \infty$, then
\begin{equation*}
\int |D^{\nu,\omega}\big(x_{T}\big)| \,\d \mu(x_{T},\omega)<\infty.
\end{equation*}
Furthermore, there exists a positive constant $C>0$ depending on $T,$ $h_{*}$,  and $\pnorm{\infty}{f}$, such that
\begin{equation*}
|\Gamma_{\nu}(\mu)| \leq I(\mu|P)+C.
\end{equation*}
In particular,  $\Gamma_{\nu}(\mu)$ is finite whenever $I(\mu|P)$ is finite. Moreover, there exists $\delta \in (0,1)$ and $c>0$ such that
\begin{equation}\label{eq:bound_gamma}
\Gamma_{\nu}(\mu) \leq \delta I(\mu|P)+c.
\end{equation}
\item \label{lemma:ldp_perturbation}
The following equality holds:
\begin{equation*}
H_{\nu}(\mu)=I(\mu|Q_{\nu}).
\end{equation*}
\item \label{lemma:perturbation} There exists a constant $c>0$, depending only on $T,$ $h_{*}$,  and $\pnorm{\text{SL}}{f}$, such that
\begin{equation*}
|\Gamma_{\nu}(\mu)-\Gamma_{\mu}(\mu)| \leq c(I(\mu|P)+1)\d^T_{\text{BL}}(\mu,\nu).
\end{equation*}
In particular,
\[|H_\nu(\mu)-H_\mu(\mu)|\leq c(I(\mu|P)+1)\d^T_{\text{BL}}(\mu,\nu).\]
\end{enumerate}
\end{lemma}

\begin{remark}\label{remark:infinite_entropy}
In particular, the lemma above implies that $H_{\nu}(\mu)$ is finite if, and only if $I(\mu|P)$ is also finite.
\end{remark}

\par We prove the first item above in Subsection~\ref{proof:gamma_finite}. Item~\ref{lemma:ldp_perturbation} is proved in Subsection~\ref{proof:ldp_pertubation} and the proof of~\ref{lemma:perturbation} can be found in Subsection~\ref{proof:pertubation}.

\par The second lemma states useful estimates that we need when comparing $Q_N(L_N\in A)$ to either $Q_\nu^{\otimes N}(L_N \in A)$ or $P^{\otimes N}(L_N \in A)$, for a given measurable set $A\subset \sM_1(\sC_{-\tau}^T\times\R^d).$
\begin{lemma}[Fundamental estimates]\label{lemma:estimates}
Consider $\alpha \in \R$, $N\in \N$, and $\nu \in \sM_1(\sC_{-\tau}^T\times \R^d)$ fixed. 
\begin{enumerate}
\item \label{estimate:exptight}
There exists a positive constant $C$ depending on $h_*$, $\supnorm{f}$, and $T$ such that 
\begin{equation}\label{eq:l52-1}
\begin{split}
\int \bigg(\dfrac{\d Q_\nu^{\otimes N}}{\d P^{\otimes N}}\bigg)^{\alpha} \, \d P^{\otimes N} & \leq \exp\Big\{NC|\alpha^2-\alpha|\Big\}, \\
\int \bigg(\dfrac{\d Q_N}{\d P^{\otimes N}}\bigg)^{\alpha} \, \d P^{\otimes N} & \leq \exp\Big\{NC|\alpha^2-\alpha|\Big\}.
\end{split}
\end{equation}
\item \label{lemma:estimate_error_empirical_measure} 
There exists a positive constant $C$, depending on $\pnorm{\text{SL}}{f}$, $\supnorm{f}$, $h_*,$ and $T$, such that, for any $\eta \in (0,1)$,
\begin{equation}\label{eq:l52-2}
\int_{\{L_N \in B(\nu,\eta)\}}\bigg(\dfrac{\d Q_N}{\d Q_\nu^{\otimes N}} \bigg)^\alpha \, \d P^{\otimes N} \leq \exp\Big\{NC(\alpha^{2}+|\alpha|)\eta\Big\}.
\end{equation}
\item \label{item:RN2nd} There exists a positive constant $C$, depending on $\pnorm{BL}{f}$, $\supnorm{f}$, $h_*,$ and $T$, such that, for any $\eta \in (0,1)$,
\begin{equation}\label{eq:l52-3}
\int_{\{L_N \in B(\nu,\eta)\}}\bigg(\dfrac{\d Q_N}{\d Q_\nu^{\otimes N}} \bigg)^\alpha \, \d Q^{\otimes N}_{\nu}
\leq \exp \left\{NC\left(\alpha^2+|\alpha|)\sqrt{\eta})\right)\right\}.
\end{equation}
\end{enumerate}
\end{lemma}

\par We prove Item~\ref{estimate:exptight} of Lemma~\ref{lemma:estimates} in Subsection~\ref{proof:estimate}. Item~\ref{lemma:estimate_error_empirical_measure} of Lemma~\ref{lemma:estimates} is proved in Subsection~\ref{estimate:RN1st}. The last item is proved in Subsection~\ref{sec:RN2nd}.

\par In Section~\ref{sec:proof_wldp}, we use Lemmas~\ref{lemma:rate_function} and~\ref{lemma:estimates} to conclude the proof of the wLDP and exponential tightness. We use Lemma~\ref{lemma:rate_function} in Section~\ref{sec:minimizer} to conclude the proof of Theorem~\ref{thm:minimizer}. If the reader wishes, it is possible to assume the lemmas and skip directly to Section~\ref{sec:proof_wldp} or~\ref{sec:minimizer}.

\section{Rate-function Lemma}\label{subsec:rate_function}
~
\par In this section we present the proof of Lemma~\ref{lemma:rate_function}. We divide the proof in three subsections. We prove the first statement of the lemma in Subsection~\ref{proof:gamma_finite}. The proof of Item~\ref{lemma:ldp_perturbation} can be found in Subsection~\ref{proof:ldp_pertubation}. Finally, the proof of~\ref{lemma:perturbation} is presented in~\ref{proof:pertubation}.

\subsection{When $\Gamma_\nu(\mu)$ is finite}
\label{proof:gamma_finite}
~
\par We here prove Item~\ref{prop:gamma_finite} of Lemma~\ref{lemma:rate_function}. The idea of the proof is to use the entropy inequality~\eqref{eq:entropy_inequality} to relate the integrals with respect to $\mu$ and with respect to $P$.

\par Combining the entropy inequality with the inequality $e^{|x|} \leq e^{x}+e^{-x}$, we obtain
\begin{equation*}
\begin{split}
|\Gamma_{\nu}(\mu)| & \leq\int |D^{\nu, \omega}(x_{T})| \, \d \mu(x_{T},\omega) \\
& \leq I(\mu|P)+ \log \int e^{|D^{\nu, \omega}(x_{T})|} \, \d P(x_{T}, \omega) \\
& \leq I(\mu|P)+ \log \int \left( e^{D^{\nu, \omega}(x_{T})}+e^{-D^{\nu, \omega}(x_{T})} \right) \, \d P(x_{T}, \omega).
\end{split}
\end{equation*}

Similarly, recall~\eqref{eq:dQnu/dP} to obtain
\begin{equation*}
\begin{split}
\int |D^{\nu, \omega}|\,\d \mu & \leq I(\mu|Q_{\nu})+ \log \int \exp\{|D^{\nu, \omega}|\}\,\d Q_{\nu}\\
& = I(\mu|Q_{\nu})+ \log \int \exp\{|D^{\nu, \omega}|+D^{\nu, \omega}\}\,\d P\\
& \leq I(\mu|Q_{\nu})+ \log \int \exp\{2|D^{\nu, \omega}|\}\,\d P\\
& \leq I(\mu|Q_{\nu})+ \log \int \left(e^{2D^{\nu, \omega}}+e^{-2D^{\nu, \omega}}\right)\,\d P.
\end{split}
\end{equation*}

\par Therefore, assuming that either $I(\mu|P)<\infty$ or $I(\mu|Q_\nu)<\infty$, it suffices to verify that, for any $\alpha \in \R$,
\begin{equation}\label{eq:integral_bound}
\int e^{\alpha D^{\nu, \omega}(x_{T})} \, \d P(x_{T}, \omega) \leq \exp\left\{ \frac{|\alpha^{2}-\alpha|}{2h_{*}^{2}}T\pnorm{\infty}{f}^{2}\right\}.
\end{equation}

\par In order to prove~\eqref{eq:integral_bound}, we write $e^{\alpha D^{\nu, \omega}(x_{T})}$ as a product of a positive martingale and a bounded term. Notice first that Novikov's Condition (see~\cite[Corollary~3.5.13]{karatzas}) together with the fact that, under $P^{\omega}$, $\frac{1}{h(\omega)}\big(x(t)-x(0)\big)$ is a Brownian motion independent of $\big(x(t)\big)_{t \in [-\tau,0]}$ imply that, for each $\omega \in \R^{d}$,
\begin{equation}\label{eq:martingale}
M^{\omega, \alpha}(s) = \exp\left\{ \frac{\alpha}{h(\omega)^{2}}\int_{0}^{s}f(t, x_{t}, \nu, \omega)\,\d x(t)- \frac{\alpha^{2}}{2h(\omega)^{2}}\int_{0}^{s}f(t, x_{t}, \nu, \omega)^{2} \,\d t \right\},
\end{equation}
for $s \in [0,T]$, is a positive mean-one martingale with respect to $P^{\omega}$. Here, we use the independence between $\frac{1}{h(\omega)}\big(x(t)-x(0)\big)$ and $\big(x(t)\big)_{t \in [-\tau,0]}$ to obtain a martingale for each fixed realization of $\big(x(t)\big)_{t \in [-\tau,0]}$ and afterwards integrating with respect to the distribution of these functions. This type of argument is going to appear throughout the text and we will not mention this technicality anymore.

\par With this in mind, we rewrite $\alpha D^{\nu, \omega}(x_{T})$ as
\begin{align*}
\begin{split}
\alpha D^{\nu, \omega}(x_{T}) =& \frac{\alpha}{h(\omega)^{2}}\int_{0}^{T}f(t, x_{t}, \nu, \omega)\,\d x(t)- \frac{\alpha}{2h(\omega)^{2}}\int_{0}^{T}f(t, x_{t}, \nu, \omega)^{2} \,\d t  \\
=& \frac{\alpha}{h(\omega)^{2}}\int_{0}^{T}f(t, x_{t}, \nu, \omega)\,\d x(t)- \frac{\alpha^{2}}{2h(\omega)^{2}}\int_{0}^{T}f(t, x_{t}, \nu, \omega)^{2} \,\d t \\
 &+ \frac{\alpha^{2}-\alpha}{2h(\omega)^{2}}\int_{0}^{T}f(t, x_{t}, \nu, \omega)^{2} \,\d t .
\end{split}
\end{align*}

\par The equality above yields
\begin{equation*}
\begin{split}
\int e^{\alpha D^{\nu, \omega}(x_{T})} \, \d P^{\omega} &= \int M^{\omega, \alpha}(T)\exp\left\{ \frac{\alpha^{2}-\alpha}{2h(\omega)^{2}}\int_{0}^{T}f(t, x_{t}, \nu, \omega)^{2} \,\d t \right\} \, \d P^{\omega} \\
& \leq \exp\left\{ \frac{|\alpha^{2}-\alpha|}{2h_{*}^{2}}T\pnorm{\infty}{f}^{2}\right\} \int M^{\omega, \alpha}(T) \, \d P^{\omega} \\
& = \exp\left\{ \frac{|\alpha^{2}-\alpha|}{2h_{*}^{2}}T\pnorm{\infty}{f}^{2}\right\}.
\end{split}
\end{equation*}
Integrating the last expression with respect to $\mu_{\text{med}}$ concludes the proof of the first statement.

To verify~\eqref{eq:bound_gamma}, let $\alpha>1$ apply the entropy inequality~\eqref{eq:entropy_inequality} to $\alpha D^{\nu, \omega}(x_{T})$ and \eqref{eq:integral_bound} to obtain
\begin{equation*}
\begin{split}
\alpha \Gamma_{\nu}(\mu) & = \int \alpha D^{\nu, \omega}(x_{T}) \, \d \mu(x_{T},\omega) \\
& \leq I(\mu|P)+ \log \int e^{\alpha D^{\nu, \omega}(x_{T})} \, \d P(x_{T}, \omega) \\
& \leq I(\mu|P)+ \frac{|\alpha^{2}-\alpha|}{2h_{*}^{2}}T\pnorm{\infty}{f}^{2},
\end{split}
\end{equation*}
in view of~\eqref{eq:integral_bound}. Taking any $\alpha>1$ concludes the proof.

\subsection{An alternative expression for $I(\,\cdot\, |Q_\nu)$}\label{proof:ldp_pertubation}
~
\par The goal of this section is to prove Item~\ref{lemma:ldp_perturbation} of Lemma~\ref{lemma:rate_function}. We want to prove that, for any $\mu, \nu \in \sM_{1}(\sC_{-\tau}^{T} \times \R^{d})$,
\begin{equation}
H_{\nu}(\mu)=I(\mu|Q_{\nu}).
\end{equation}
We prove the equality in two steps:
\begin{equation}
I(\mu|Q_{\nu})\leq H_{\nu}(\mu) \mbox{ and }  H_{\nu}(\mu)\leq I(\mu|Q_{\nu}).
\end{equation}
Once again, the idea of the proof is to apply the entropy inequality~\eqref{eq:entropy_inequality} in several ways.

\textbf{First step:} $I(\mu|Q_{\nu})\leq H_{\nu}(\mu)$. We consider two cases: either $I(\mu|P)<\infty$ or $I(\mu|P)=\infty$.

\par If $I(\mu|P)=\infty$ then, from Definition~\ref{eq:def_of_Hnu}, we have $H_\nu(\mu)=\infty$ and the bound $I(\mu|Q_{\nu})\leq H_{\nu}(\mu)$ holds trivially.

\par Assume now that $I(\mu|P)<\infty$. In this case, we know from Item~\ref{prop:gamma_finite} of Lemma~\ref{lemma:rate_function} that
\begin{equation*}
\int \left| D^{\nu,\omega}\big(x_{T}\big) \right| \,\d \mu(x_{T},\omega)< \infty.
\end{equation*}
In particular, $\Gamma_{\nu}(\mu)$ is also finite.

\par Let $\phi:\C_{-\tau}^{T}\times \R^{d} \to \R$ be a bounded function and apply the entropy inequality~\eqref{eq:entropy_inequality} to obtain
\begin{equation*}
\begin{split}
\int \phi\,\d \mu &= \int (\phi+D^{\nu, \omega}) \,\d \mu - \Gamma_{\nu}(\mu) \\
& \leq I(\mu|P) + \log \int \exp\{\phi+D^{\nu, \omega}\}\,\d P - \Gamma_{\nu}(\mu) \\
& =I(\mu|P) - \Gamma_{\nu}(\mu) + \log \int \exp\{\phi\}\,\d Q_{\nu},
\end{split}
\end{equation*}
which yields
\begin{equation*}
\int \phi\,\d \mu-\log \int \exp\{\phi\}\,\d Q_{\nu}\leq I(\mu|P) - \Gamma_{\nu}(\mu).
\end{equation*}
Taking the supremum over all possible choices of $\phi$, we have
\begin{equation*}
I(\mu|Q_{\nu}) \leq I(\mu|P) - \Gamma_{\nu}(\mu) = H_{\nu}(\mu),
\end{equation*}
concluding the first step.

\textbf{Second step:} $H_{\nu}(\mu)\leq I(\mu|Q_{\nu})$. Notice that if $I(\mu|Q_{\nu})$ is infinite, the inequality holds trivially. Hence, we assume that $I(\mu|Q_{\nu})$ is finite. From Item~\ref{prop:gamma_finite} of Lemma~\ref{lemma:rate_function}, $D^{\nu,\omega}$ is $\mu-$integrable. 

Let us show that $I(\mu|P)$ is finite. For any bounded function $\phi: \C_{-\tau}^{T} \times \R^{d} \to \R$, using~\eqref{eq:dQnu/dP},
\begin{align*}
\int \phi\,\d\mu &=\int (\phi-D^{\nu, \omega})\,\d\mu+\int D^{\nu, \omega}\,\d \mu\\
&\leq I(\mu\vert Q_\nu)+\log \int \exp\{\phi-D^{\nu, \omega}\}\,\d Q_\nu+\int D^{\nu, \omega}\,\d \mu\\
&\leq I(\mu\vert Q_\nu)+\log \int \exp\{\phi\}\,\d P+\int D^{\nu, \omega}\,\d \mu.
\end{align*}
As consequence,
\begin{align*}
\int \phi\,\d\mu-\log \int \exp\{\phi\}\,\d P\leq I(\mu\vert Q_\nu) +\int D^{\nu, \omega}\,\d \mu.
\end{align*}
Taking the supremum over all possible choices of $\phi$,
\begin{equation*}
I(\mu|P) \leq I(\mu|Q_{\nu}) + \int D^{\nu, \omega}\,\d \mu < \infty,
\end{equation*}
and therefore
\begin{equation*}
H_\nu(\mu)=I(\mu|P)-\int D^{\nu,\omega}\,\d \mu\leq I(\mu|Q_\nu),
\end{equation*}
concluding the proof.

\subsection{Comparing $\Gamma_\nu(\mu)$ with $\Gamma_\mu(\mu)$}
\label{proof:pertubation}
~
\par In this subsection, we prove Item~\ref{lemma:perturbation} of Lemma~\ref{lemma:rate_function}.

\par We can assume that $I(\mu|P)<\infty$ since otherwise the bound is trivial. Under this assumption we also know from Item~\ref{prop:gamma_finite} of Lemma~\ref{lemma:rate_function} that
\begin{equation*}
\Gamma_\nu(\mu)=\int D^{\nu,\omega}\big(x_{T}\big) \, \d \mu(x_{T},\omega)
\end{equation*}
is finite. Therefore, we need to verify the bound 
\begin{equation}\label{goal:D}
\left| \int \big[ D^{\nu,\omega}\big(x_{T}\big)-D^{\mu,\omega}\big(x_{T}\big)\big] \, \d \mu(x_{T},\omega) \right| \leq c(I(\mu|P)+1)\d^T_{\text{BL}}(\mu,\nu),
\end{equation} 
where $D^{\nu, \omega}$ is given in~\eqref{def:D} as
\begin{equation}
D^{\nu,\omega}(x_{T}):=\frac{1}{h(\omega)^{2}}\int_{0}^{T}f(t,x_{t}, \nu, \omega)\,\d x(t)- \frac{1}{2h(\omega)^{2}}\int_{0}^{T}f(t, x_{t}, \nu, \omega)^{2} \,\d t.
\end{equation}

\par We consider the stochastic integral separately from the usual integral and write
\begin{equation}\label{eq:perturbation_1}
\int \big[ D^{\nu,\omega}\big(x_{T}\big)-D^{\mu,\omega}\big(x_{T}\big)\big]\,\d \mu(x_{T},\omega) =\int \big[B_1(x_T,\omega)+B_2(x_T,\omega)\big]\, \d \mu(x_T,\omega)
\end{equation}
where, for each pair $(x_{T}, \omega) \in \sC_{-\tau}^{T} \times \R^{d}$,
\begin{equation}\label{eq:perturbation_B1}
B_{1}(x_T,\omega) := \frac{1}{2h(\omega)^{2}}\int_{0}^{T}\big[f(t,x_{t}, \omega, \nu)^{2}-f(t,x_{t}, \omega, \mu)^{2}\big] \, \d t,
\end{equation}
and
\begin{equation}\label{eq:perturbation_B2}
B_{2}(x_T,\omega) :=\frac{1}{h(\omega)^{2}}\int_{0}^{T} \big[f(t,x_{t}, \omega, \nu)-f(t,x_{t}, \omega, \mu) \big] \, \d x(t).
\end{equation}

We will prove that
\begin{align}
\int |B_{1}(x_T,\omega)| \, \d \mu(x_T,\omega)&\leq \frac{1}{h_{*}^{2}} \pnorm{\infty}{f}\pnorm{\text{SL}}{f} T \d_{\text{BL}}^{T}(\mu,\nu),\label{bound:B1}\\
\int |B_{2}(x_T,\omega)| \, \d \mu(x_T,\omega) &\leq \frac{1}{h_{*}}\pnorm{\text{SL}}{f}T^{\frac{1}{2}}\left(I(\mu|P)+1\right)\d^T_{\text{BL}}(\mu,\nu).\label{bound:B2}
\end{align}
Once we verify the previous bounds the proof is completed with the evident choice of $c$.

We proceed to prove the bound in~\eqref{bound:B1}. For this, recall our assumptions on $f$ (cf. Section~\ref{subsec:assumptions}). Using the identity $a^{2}-b^{2}=(a-b)(a+b)$ we obtain, uniformly in $(x_{T},\omega) \in \C_{-\tau}^{T} \times \R^{d}$,
\begin{multline}\label{eq:perturbation_2}
\begin{split}
|B_{1}(x_T,\omega)| &\leq  \\
\leq\frac{1}{2h(\omega)^{2}}&\int_{0}^{T}\big|f(t,x_{t},\nu,\omega)+f(t,x_{t},\mu, \omega)\big|\big|f(t,x_{t},\nu, \omega)-f(t,x_{t},  \mu,\omega)\big| \, \d t,\\
& \leq \frac{1}{h_{*}^{2}} \pnorm{\infty}{f}\int_{0}^{T}\left| f(t,x_{t}, \nu, \omega)-f(t,x_{t}, \mu,\omega)\right|\,\d t \\
& \leq \frac{1}{h_{*}^{2}} \pnorm{\infty}{f}\pnorm{\text{SL}}{f}\int_{0}^{T}\d_{\text{BL}}^{t}(\nu , \mu )\,\d t  \leq \frac{1}{h_{*}^{2}} \pnorm{\infty}{f}\pnorm{\text{SL}}{f} T \d_{\text{BL}}^{T}(\mu,\nu),
\end{split}
\end{multline}
which concludes the proof of~\eqref{bound:B1}.

We now prove the bound in~\eqref{bound:B2}. We cannot apply the entropy inequality~\eqref{eq:entropy_inequality} directly to $\int |B_{2}(x_T,\omega)| \, \d \mu(x_T,\omega)$, because it would yield an expression with a free factor $I(\mu |P)$ that needs to be multiplied by $\d^T_{\text{BL}}(\mu,\nu)$. The trick here is to apply the entropy inequality for $B_2(x_T,\omega)/\beta,$ where $\beta$ is a constant conveniently chosen.

We write
\begin{align*}
\int |B_{2}(x_T,\omega)| \, \d \mu(x_T,\omega)&=\beta\int \frac{|B_{2}(x_T,\omega)|}{\beta}\,\d \mu(x_T,\omega)
\end{align*}
and, by choosing $\beta = \frac{1}{h_{*}}\pnorm{\text{BL}}{f}T^{1/2}\d_{\text{BL}}^{T}(\mu,\nu)$, it suffices to verify that
\begin{align}\label{bound:entropy}
\int \frac{|B_{2}(x_T,\omega)|}{\beta}\,\d \mu(x_T,\omega)\leq  I(\mu|P)+1.
\end{align}
Applying the entropy inequality~\eqref{eq:entropy_inequality}
\begin{equation}\label{beta-entropy}
\int \frac{|B_{2}(x_T,\omega)|}{\beta}\,\d \mu(x_T,\omega)\leq I(\mu | P)+\log \int \exp\left\{\frac{|B_{2}(x_T,\omega)|}{\beta}\right\}\,\d P(x_T,\omega).
\end{equation}

We now need to estimate the second term of the RHS of the previous equation. For that, we will use a suitable martingale. From Novikov's Condition, for every $\alpha \in \R$ and $\omega \in \R^{d}$, 
\begin{multline*}
M^{\alpha, \nu, \mu,\omega}(x_T,s) =\exp \Bigg\{  \int_{0}^{s}\frac{\alpha}{h(\omega)^{2}}\big[ f(t,x_{t}, \nu, \omega) - f(t,x_{t}, \mu, \omega)\big]\,\d x(t) \\
-\int_{0}^{s}\frac{\alpha^{2}}{2h(\omega)^2}\big[ f(t,x_{t}, \nu, \omega) - f(t,x_{t}, \mu, \omega) \big]^{2} \,\d t\Bigg\},
\end{multline*}
for $s\in [0,T],$ is a positive mean-one martingale with respect to $ P^{\omega}$ (see also Equation~\ref{eq:martingale}).

Therefore, summing and subtracting the suitable term
\begin{multline*}
\exp\left\{\alpha \frac{B_{2}(x_T,\omega)}{\beta}\right\}= M^{\alpha/\beta, \nu, \mu,\omega}(\vec x_T,s)\\
\times\exp\left\{\frac{\alpha^2}{2\beta^2h(\omega)^2}\int_{0}^{T} \big[f(t,x_{t}, \nu,\omega)-f(t,x_{t}, \mu,\omega) \big]^2 \, \d t\right\}.
\end{multline*}

Proceeding as in Equation~\eqref{eq:perturbation_2}, we have the following estimate, uniformly in $\omega \in \R^{d}$,  for the quadratic variation at time $T$ 
\begin{equation}
\frac{1}{h(\omega)^{2}} \int_{0}^{T} \Big( f(t,x_{t},\nu,\omega)-f(t,x_{t}, \mu,\omega) \Big)^{2}\,\d t \leq \frac{1}{h_{*}^{2}}\pnorm{\text{SL}}{f}^2T\d_{\text{BL}}^{T}(\mu,\nu)^2
\end{equation}
and, therefore,
\begin{equation*}
\log \int \exp\Big\{\alpha \frac{B_{2}(x_T,\omega)}{\beta}\Big\}\,\d P^\omega(x_T)\leq \frac{\alpha^2}{2\beta^2h_{*}^{2}}\pnorm{\text{SL}}{f}^2T\d_{\text{BL}}^{T}(\mu,\nu)^2.
\end{equation*}
As consequence, using the inequality $e^{|x|}\leq e^x+e^{-x},$
\begin{equation*}
\log \int \exp\Bigg\{\frac{|B_{2}(x_T,\omega)|}{\beta}\Bigg\}\,\d P(x_T,\omega)\leq \frac{1}{\beta^2h_{*}^{2}}\pnorm{\text{SL}}{f}^2T\d_{\text{BL}}^{T}(\mu,\nu)^2.
\end{equation*}
The choice of $\beta$ together with~\eqref{beta-entropy} concludes the proof of~\eqref{bound:entropy}.

\section{Fundamental Estimates}\label{sec:fundamental_estimates}
~
\par This section contains the proof of Lemma~\ref{lemma:estimates}. We prove Item~\ref{estimate:exptight} in Subsection~\ref{proof:estimate}. The proof of Item~\ref{lemma:estimate_error_empirical_measure} can be found in Subsection~\ref{estimate:RN1st}, while the proof of Item~\ref{item:RN2nd} is presented in Subsection~\ref{sec:RN2nd}.

\subsection{The moment generating function}
\label{proof:estimate}
~
\par In this subsection we prove the estimates in Item~\ref{estimate:exptight} of Lemma~\ref{lemma:estimates}.

Observe that the first estimate can be deduced from Estimate~\eqref{eq:integral_bound}  since
\begin{equation*}
\alpha N\int D^{\nu,\sigma}(y_T) \, L_N(\d y_T,\d \sigma)=\sum_{i=1}^N \alpha D^{\nu,\omega^i}(x^i_T)
\end{equation*}
and we are integrating over a product measure. Therefore, using~\eqref{eq:dQnu/dP}, we obtain the bound
\begin{equation*}
\int \bigg(\dfrac{\d Q_\nu^{\otimes N}}{\d P^{\otimes N}}\bigg)^{\alpha} \, \d P^{\otimes N} \leq \exp\Big\{NC_1|\alpha^2-\alpha|\Big\},
\end{equation*}
for a positive constant $C_1$ depending on $\supnorm{f},$ $h_*$, and $T$.

We now deal with the second estimate. From~\eqref{eq:dQN/dP} our goal is to obtain bounds for the moment generating function
\begin{equation}\label{goal:estimate}
\int \exp\left\{\alpha N\int D^{L_N(\vec{x}_T,\vec{\omega}),\sigma}\big(y_T\big) \, L_N(\vec{x}_T,\vec{\omega})(\d y_T,\d \sigma) \right\}\, \d P^{\otimes N}(\vec{x}_T,\vec{\omega}).
\end{equation}
In the following, we omit the dependence of $L_N$ on the vector $(\vec{x}_T,\vec{\omega})$.

\par Recall from~\eqref{def:D} that
\[D^{L_N,\sigma}(y_{T}):=\frac{1}{h(\sigma)^{2}}\int_{0}^{T}f(t,y_{t}, L_N, \sigma)\,\d y(t)- \frac{1}{2h(\sigma)^{2}}\int_{0}^{T}f(t, y_{t}, L_N, \sigma)^{2} \,\d t.
\]

\par We will apply a similar strategy as in the proof of Estimate~\eqref{eq:integral_bound}. The idea is to rewrite~\eqref{goal:estimate} as a product of a positive martingale and a bounded term.

\par Given a vector $(\vec{x}_T,\vec{\omega})$, we have $L_{N}= \frac{1}{N}\sum_{i=1}^{N} \delta_{(x^{i}_{T}, \omega^{i})}$, and directly from the definition of $L_N$ and $D^{L_N,\omega}$, we obtain
\begin{equation*}
\begin{split}
& \alpha N\int D^{L_N,\sigma}\big(y_T\big) \, L_N(\d y_T,\d \sigma) = \alpha \sum_{i=1}^{N} D^{L_N,\omega^i}(x_T^i)  \\
& =  \sum_{i=1}^{N} \left[ \frac{\alpha}{h(\omega^i)^{2}}\int_{0}^{T}f(t,x^i_t, L_N, \omega^i )\,\d x^i(t) - \frac{\alpha}{2h(\omega^i)^{2}} \int_{0}^{T}f(t, x^i_t, L_N, \omega^i)^{2} \,\d t \right].
\end{split}
\end{equation*}

Notice that, for every $\alpha \in \R$, $N\in \N,$ $ \vec\omega \in \R^{Nd}$, 
\begin{multline*}
M^{\alpha,N,\vec\omega}(\vec x_T,s)  = \exp \Bigg\{  \sum_{i=1}^{N} \bigg[ \frac{\alpha}{h(\omega^i)^{2}}\int_{0}^{s}f(t,x^i_t, L_N, \omega^i)\,\d x(t)\\
- \frac{\alpha^2}{2h(\omega^i)^{2}} \int_{0}^{s}f(t, x^i_t, L_N, \omega^i)^{2} \,\d t \bigg] \Bigg\},
\end{multline*}
for $s\in [0,T],$ is a mean-one martingale with respect to $\prod_{i=1}^{N} P^{\omega^{i}}$ (see also Equation~\ref{eq:martingale}).

With this is mind, we write
\begin{multline*}
\int \exp\left\{\alpha N\int D^{L_N,\sigma}\big(y_T\big) \, L_N(\d y_{T},\d \sigma) \right\}\, \d P^{\otimes N}(\vec{x}_T,\vec{\omega})=\\
 =\int M^{\alpha,N,\vec\omega}\exp \left\{  \sum_{i=1}^{N} \frac{\alpha^2-\alpha}{2h(\omega^i)^{2}} \int_{0}^{T} f(t, x^i_t, L_N, \omega^i)^{2}  \,\d t \right\} \prod_{i=1}^{N}
 \, \d P^{\omega^{i}} \, \d \mu_{\text{med}}.
\end{multline*}
To conclude, we just need to plug the uniform estimate (cf. Subsection~\ref{subsec:assumptions})
\begin{align*}
\sum_{i=1}^{N} \frac{\alpha^2-\alpha}{2h(\omega^i)^{2}} \int_{0}^{T} f(t, x^i_t, L_N, \omega^i)^{2}  \,\d t\leq N\frac{|\alpha^2-\alpha|}{2h_*^{2}} T\supnorm{ f}^2,
\end{align*}
and use that the martingale has mean one.

\subsection{The Radon-Nikodym derivatives - Part I}
\label{estimate:RN1st}
~
\par In this subsection, we prove Item~\ref{lemma:estimate_error_empirical_measure} of Lemma~\ref{lemma:estimates}. Our goal is to estimate
\begin{multline}\label{goal:RN1st}
\int_{E_\eta(\nu)}\exp\Bigg\{\alpha N \int \big(D^{L_{N}, \sigma}(y_T)- D^{\nu, \sigma}(y_T)\big) \,L_N(\d y_T,\d \sigma) \Bigg\} \, \d P^{\otimes N}(\vec x_T,\vec \omega),
\end{multline}
where $E_\eta(\nu)$ is the event $\{L_N \in B(\nu,\eta)\}$. Notice that in the equation above we once again omitted the dependence of $L_{N}$ on the vector $(\vec x_T,\vec \omega)$.

\par In order to bound the quantity above, we will combine the ideas used in~\eqref{eq:integral_bound} and Item~\ref{estimate:exptight} of Lemma~\ref{lemma:estimates} (see Equation~\eqref{eq:l52-1}). We will expand the expression and decompose it as a product of a martingale and a bounded term.

Recalling from~\eqref{def:D} that
\[D^{L_N,\sigma}(y_{T}):=\frac{1}{h(\sigma)^2}\int_{0}^{T}f(t,y_{t}, L_N, \sigma)\,\d y(t)- \frac{1}{2h(\sigma)^{2}}\int_{0}^{T}f(t, y_{t}, L_N, \sigma)^{2} \,\d t,
\]
we write
\begin{multline*}
\alpha N \int \big[D^{L_{N}, \sigma}(y_T)- D^{\nu, \sigma}(y_T)\big] \,L_N(\d y_T,\d \sigma)=\\
=\sum_{i=1}^{N} \int_{0}^{T} \frac{\alpha}{h(\omega^i)^2}\big[f(t,x_{t}^{i}, L_{N}, \omega^{i}) - f(t,x_{t}^{i}, \nu, \omega^{i})\big]\,\d x^{i}(t)\\
-\frac{\alpha}{2h(\omega^i)^2}\sum_{i=1}^{N}\int_{0}^{T}\big[ f(t,x_{t}^{i}, L_{N}, \omega^{i})^2 - f(t,x_{t}^{i}, \nu, \omega^{i})^2 \big] \,\d t.
\end{multline*}
Motivated by the previous expression, we notice that, for every $\alpha \in \R$, $N\in \N,$ $ \vec\omega \in \R^{Nd}$, 
\begin{multline}\label{eq:martingale_radon_nykodin}
M^{\alpha, \nu, N,\vec\omega}(\vec x_T,s) =\exp \Bigg\{ \sum_{i=1}^{N} \int_{0}^{s}\frac{\alpha}{h(\omega^i)^2}\big[ f(t,x_{t}^{i}, L_{N}, \omega^{i}) - f(t,x_{t}^{i}, \nu, \omega^{i})\big]\,\d x^{i}(t) \\
-\sum_{i=1}^{N}\int_{0}^{s}\frac{\alpha^{2}}{2h(\omega^i)^2}\big[ f(t,x_{t}^{i}, L_{N}, \omega^{i}) - f(t,x_{t}^{i}, \nu, \omega^{i}) \big]^{2} \,\d t\Bigg\},
\end{multline}
for $s\in [0,T],$ is a positive mean-one martingale with respect to $\prod_{i=1}^{N} P^{\omega_{i}}$ (see Equation~\ref{eq:martingale}).

Adding and subtracting the correct quantity, we obtain that
\begin{multline*}
\exp\Bigg\{\alpha N \int \big[D^{L_{N}, \sigma}(y_T)- D^{\nu, \sigma}(y_T)\big] \,L_N(\d y_T,\d \sigma) \Bigg\} =\\
= M^{\alpha, \nu, N,\vec\omega}(\vec x_T,T)\exp\Big\{B_1^\alpha(\vec{x}_T,\vec{\omega})+B_2^\alpha(\vec{x}_T,\vec{\omega})\Big\},
\end{multline*}
where
\begin{align*}
B_{1}^\alpha(\vec{x}_T,\vec{\omega}) & :=\sum_{i=1}^{N}\int_{0}^{T}\frac{\alpha^{2}}{2h(\omega^i)^2}\big[f(t,x_{t}^{i}, L_{N}, \omega^{i}) - f(t,x_{t}^{i}, \nu, \omega^{i})\big]^{2} \,\d t,\\
B_{2}^\alpha(\vec{x}_T,\vec{\omega})&:=-\sum_{i=1}^{N}\int_{0}^{T}\frac{\alpha}{2h(\omega^i)^2}\big[f(t,x_{t}^{i}, L_{N}, \omega^{i})^{2} - f(t,x_{t}^{i}, \nu, \omega^{i})^{2}\big] \,\d t.
\end{align*}

\par Assume for a moment the following bounds, proved in the end of this subsection,
\begin{align}
B_1^\alpha(\vec{x}_T,\vec{\omega})&\leq \frac{\alpha^{2}}{2h_*^2}N\pnorm{\text{SL}}{f}^{2}T\d^{T}_{\text{BL}}(L_{N}, \nu)^{2},\label{B_1}\\
B_2^\alpha(\vec{x}_T,\vec{\omega})&\leq \frac{|\alpha|}{h_*^2} N \pnorm{\infty}{f}\pnorm{\text{SL}}{f} T\d^{T}_{\text{BL}}(L_{N}, \nu).\label{B_2}
\end{align}
In the following, using that the martingale $M^{\alpha, \nu, N}$ is positive, we obtain
\begin{multline*}
\int_{E_\eta(\nu)}M^{\alpha, \nu, N,\vec\omega}(T) \,\d P^{\otimes N}(\vec x_T,\vec \omega)\\
\leq \int M^{\alpha, \nu, N,\vec\omega}(\vec x_T,T) \prod_{i=1}^{N} \, \d P^{\omega^i}(x^i_T) \, \d\mu_{\text{med}}(\omega^i)=1.
\end{multline*}
Since we are integrating on the event $E_\eta(\nu)=\{L_N \in B(\nu,\eta)\}$, we have
\begin{align*}
\int_{E_\eta(\nu)}&\exp\Bigg\{\alpha N \int \left(D^{L_{N}, \sigma}(y_T)- D^{\nu, \sigma}(y_T)\right) \,L_N(\d y_T,\d \sigma) \Bigg\} \, \d P^{\otimes N}(\vec x_T,\vec \omega)\\
&\leq \int_{E_\eta(\nu)} M^{\alpha, \nu, N,\vec\omega}(\vec x_T,T) \\
& \qquad \qquad \qquad \times \exp\Bigg\{\frac{\alpha^{2}}{2h_*^2}N\pnorm{\text{SL}}{f}^{2}T\eta^2+\frac{|\alpha|}{h_*^2} N \pnorm{\infty}{f}\pnorm{\text{SL}}{f} T\eta\Bigg\}\d P^{\otimes N}\\
&\leq \exp\bigg\{\frac{\alpha^{2}}{2h_*^2}N\pnorm{\text{SL}}{f}^{2}T\eta^2+\frac{|\alpha|}{h_*^2} N \pnorm{\infty}{f}\pnorm{\text{SL}}{f} T\eta\bigg\}\\
&\leq \exp\Big\{NC(\alpha^{2}+|\alpha|)\eta\Big\},
\end{align*}
for a constant $C$ depending on $\pnorm{\text{SL}}{f}$, $\supnorm{f},$ $h_*,$ and $T$.

\par We dedicate the rest of this section to prove the bounds in~\eqref{B_1} and~\eqref{B_2}. Recall the assumption on the functions $f$ and $h$ made in Subsection~\ref{subsec:assumptions} and estimate
\begin{equation*}
\begin{split}
B_{1}^\alpha(\vec{x}_T,\vec{\omega}) & :=\sum_{i=1}^{N}\int_{0}^{T}\frac{\alpha^{2}}{2h(\omega^i)^2}\big[f(t,x_{t}^{i}, L_{N}, \omega^{i}) - f(t,x_{t}^{i}, \nu, \omega^{i})\big]^{2} \,\d t \\
& \leq \frac{\alpha^{2}}{2h_*^2}N\pnorm{\text{SL}}{f}^{2} \int_{0}^{T}\d^{t}_{\text{BL}}(L_{N} , \nu )^{2}\,\d t \\
& \leq \frac{\alpha^{2}}{2h_*^2}N\pnorm{\text{SL}}{f}^{2}T\d^{T}_{\text{BL}}(L_{N}, \nu)^{2}.
\end{split}
\end{equation*}

\par Similarly, with the identity $a^{2}-b^{2}=(a-b)(a+b)$, we obtain
\begin{equation*}
\begin{split}
B_{2}^\alpha(\vec{x}_T,\vec{\omega})&=-\sum_{i=1}^{N}\int_{0}^{T}\frac{\alpha}{2h(\omega^i)^2}\big[f(t,x_{t}^{i}, L_{N}, \omega^{i})^{2} - f(t,x_{t}^{i}, \nu, \omega^{i})^{2}\big] \,\d t \\
& \leq \frac{|\alpha|}{h_*^2} \pnorm{\infty}{f}\sum_{i=1}^{N}\int_{0}^{T}\left|f(t,x_{t}^{i}, L_{N}, \omega^{i})-f(t,x_{t}^{i}, \nu, \omega^{i})\right| \,\d t \\
& \leq \frac{|\alpha|}{h_*^2} \pnorm{\infty}{f}N\pnorm{\text{SL}}{f}T\d^T_{\text{BL}}(L_N,\nu).
\end{split}
\end{equation*}

\par This verifies~\eqref{B_1} and~\eqref{B_2} and concludes the proof of Item~\ref{lemma:estimate_error_empirical_measure} of Lemma~\ref{lemma:estimates}.

\subsection{The Radon-Nikodym derivatives - Part II}
\label{sec:RN2nd}
~
\par We now proceed to the proof of the last item of Lemma~\ref{lemma:estimates}. Observe that
\begin{align*}
\int_{\{L_N \in B(\nu,\eta)\}}\bigg(\dfrac{\d Q_N}{\d Q_\nu^{\otimes N}} \bigg)^\alpha \, \d Q^{\otimes N}_{\nu}=\int_{\{L_N \in B(\nu,\eta)\}}\bigg(\dfrac{\d Q_N}{\d Q_\nu^{\otimes N}} \bigg)^\alpha  \dfrac{\d Q^{\otimes N}_{\nu}}{\d P^{\otimes N}} \, \d P^{\otimes N}. 
\end{align*}
Fix $r,s \in (1,\infty)$, with $1/r+1/s=1$. Applying H\"{o}lder's Inequality, we bound the integral above by
\begin{multline*}
\Bigg[\int_{\{L_N \in B(\nu,\eta)\}}\bigg(\dfrac{\d Q_N}{\d Q_\nu^{\otimes N}} \bigg)^{r\alpha} \, \d P^{\otimes N}\Bigg]^{1/r}
\Bigg[\int_{\{L_N \in B(\nu,\eta)\}}  \bigg(\dfrac{\d Q^{\otimes N}_{\nu}}{\d P^{\otimes N}}\bigg)^s \, \d P^{\otimes N}\Bigg]^{1/s}.
\end{multline*}
\par Items~\ref{estimate:exptight} and~\ref{lemma:estimate_error_empirical_measure} of Lemma~\ref{lemma:estimates} (see Equations~\eqref{eq:l52-1} and~\eqref{eq:l52-2}) yield
\begin{align*}
\int_{\{L_N \in B(\nu,\eta)\}}  \bigg(\dfrac{\d Q^{\otimes N}_{\nu}}{\d P^{\otimes N}}\bigg)^s \, \d P^{\otimes N}&\leq \exp\Big\{NC_1|s^2-s|\Big\},\\
\int_{\{L_N \in B(\nu,\eta)\}}\bigg(\dfrac{\d Q_N}{\d Q_\nu^{\otimes N}} \bigg)^{r\alpha} \, \d P^{\otimes N}&\leq \exp\Big\{NC_2(r^2\alpha^2+r|\alpha|)\eta\Big\},
\end{align*}
where $C_1$ is a positive constant depending on $\supnorm{f},$ $h_*$, and $T$ and $C_2$ is positive constant depending on $\pnorm{\text{SL}}{f}$, $\supnorm{f}$, $h_*,$ and $T$.

\par Combining these bounds, taking $C=C_1+C_2$, and using that $1/r+1/s=1$, we obtain
\begin{align*}
\int_{\{L_N \in B(\nu,\eta)\}}\bigg(\dfrac{\d Q_N}{\d Q_\nu^{\otimes N}} \bigg)^\alpha \, \d Q^{\otimes N}_{\nu} &\leq \exp\bigg\{ NC\big(r\alpha^2\eta+|\alpha|\eta+|s-1|\big)\bigg\}\\
&=\exp\bigg\{ NC\Big(r\alpha^2\eta+|\alpha|\eta+\frac{1}{r-1}\Big)\bigg\}.
\end{align*}
Optimizing over $r>1$ we obtain an optimal bound with $r=1+\frac{1}{|\alpha|\sqrt{\eta}}$ and this concludes the proof of Item~\ref{item:RN2nd} of Lemma~\ref{lemma:estimates}.

\section{Exponential tightness and weak LDP}\label{sec:proof_wldp}
~
\par In this section, we will prove that the sequence $Q_N(L_N \in \cdot\,)$ is exponentially tight and satisfies a weak LDP with rate function $H$. This is equivalent to the statement of Theorem~\ref{t:ldp}.

\par The lower bound for open sets is proved in Subsection~\ref{lower}. We will prove the upper bound on compact sets in Subection~\ref{subsec:upper_bound}. In Subsection~\ref{sec:exptight} we prove exponential tightness.

\subsection{Lower bound for open sets}\label{lower}
~
\par In this subsection we will prove the lower bound for open sets. More specifically, for any open set $\sO \subset \sM_1\big(\sC_{-\tau}^{T} \times \R^{d}\big)$, we verify that
\begin{equation*}
\liminf_{N\to\infty} \frac{1}{N} \log Q_{N}( L_{N} \in \sO) \geq -\inf_{\mu \in \sO} H(\mu).
\end{equation*}
Observe that the bound above can easily be deduced if we conclude that, for all $\nu \in \sO$,
\begin{equation}\label{lower:Q}
\liminf_{N\to\infty} \frac{1}{N}\log Q_{N}(L_{N} \in \sO)\geq -H(\nu).
\end{equation}

\par The idea of the proof is the following. Let $\nu \in \sO$ and $\eta>0$ such that $B(\nu,\eta)\subset \sO.$ Sanov's Theorem applied to the sequence $Q_\nu^{\otimes N}(L_N \in \cdot\,)$ (cf. Section~\ref{sec:heuristics}) gives
\begin{equation}\label{lower:Qnu}
\liminf_{N\to\infty} \frac{1}{N} \log Q_{\nu}^{\otimes N}( L_{N} \in B(\nu,\eta)) \geq -\inf_{\mu \in B(\nu,\eta)} H_\nu(\mu)\geq -H_\nu(\nu).
\end{equation}
By definition, $H_\nu(\nu)=H(\nu)$. We then just need to relate the probabilities that appear in~\eqref{lower:Q} and~\eqref{lower:Qnu}. Let us now provide the details.

\par Fix $\nu \in \sO$ and $\eta_0>0$ such that $B(\nu,\eta_0)\subset \sO.$ Let $\eta \in (0,\eta_0)$ and $a,\,b\in (1,+\infty)$ with $1/a+1/b=1$. By applying H\"older's Inequality, we obtain
\begin{multline}\label{open:bound1}
Q^{\otimes N}_{\nu}(L_{N} \in B(\nu, \eta))=\int_{\{L_{N} \in B(\nu, \eta)\}}\dfrac{\d Q_\nu^{\otimes N}}{\d Q_N} \, \d Q_N\\
\leq Q_N(L_{N} \in B(\nu, \eta))^{1/a}\bigg[\int_{\{L_{N} \in B(\nu, \eta)\}}\bigg(\dfrac{\d Q_\nu^{\otimes N}}{\d Q_N}\bigg)^b \, \d Q_N\bigg]^{1/b}.
\end{multline}

\par By Item~\ref{item:RN2nd} of Lemma~\ref{lemma:estimates} (with $\alpha=b-1$), there exists a constant $C=C(\pnorm{\text{SL}}{f},h_*)$ such that, for any $\eta \in (0,\min\{\eta_0,1\})$,
\begin{multline*}
\bigg[\int_{\{L_{N} \in B(\nu, \eta)\}}\bigg(\dfrac{\d Q_\nu^{\otimes N}}{\d Q_N}\bigg)^b \, \d Q_N\bigg]^{1/b}=\bigg[\int_{\{L_{N} \in B(\nu, \eta)\}}\bigg(\dfrac{\d Q_\nu^{\otimes N}}{\d Q_N}\bigg)^{b-1} \, \d Q_\nu^{\otimes N}\bigg]^{1/b}\\
\leq \exp\left\{NC\frac{1}{b}((b-1)^2+b-1)\sqrt{\eta}\right\}\\
= \exp\left\{NC(b-1)\sqrt{\eta}\right\}.
\end{multline*}

\par Plugging this bound in~\eqref{open:bound1} and using that $1/a=1-1/b$, we obtain
\begin{multline*}
\dfrac{1}{N}\log Q^{\otimes N}_{\nu}(L_{N} \in B(\nu, \eta))\leq (1-1/b)\dfrac{1}{N}\log Q_N(L_{N} \in B(\nu, \eta))\\
+C(b-1)\sqrt{\eta}. 
\end{multline*}

\par Taking the inferior limit as $N\to\infty$ in the above, using the lower bound~\eqref{lower:Qnu} and $B(\nu,\eta)\subset \sO$ we obtain 
\begin{align*}
-H_\nu(\nu)=-H(\nu)\leq& (1-1/b)\liminf_{N\to\infty}\dfrac{1}{N}\log Q_N(L_{N} \in B(\nu, \eta))+C(b-1)\sqrt{\eta}\\
\leq & (1-1/b)\liminf_{N\to\infty}\dfrac{1}{N}\log Q_N(L_{N} \in \sO)+C(b-1)\sqrt{\eta}.
\end{align*}
We now take the limits $\eta\to 0$ and $b\to \infty$ to recover the lower bound in~\eqref{lower:Q}, concluding the proof.

\subsection{Upper bound for compact sets}\label{subsec:upper_bound}
~
\par Let $\sK \subset \M_{1}(\C_{-\tau}^T \times \R^{d})$ be a compact set. We want to prove that
\begin{equation}\label{upper:goal}
\limsup_{N\to\infty} \frac{1}{N} \log Q_{N}( L_{N} \in \sK) \leq -\inf_{\mu \in \sK} H(\mu).
\end{equation} 

\par As in Subsection~\ref{lower}, we will use the upper bound we already have for the sequence $Q_\nu^{\otimes N}(L_N \in \sK).$ To recover a bound for $Q_N(L_N \in \sK)$ we will need to compare the Radon-Nikodym derivatives and the rate functions using Lemmas~\ref{lemma:rate_function} and~\ref{lemma:estimates}. For this, we will cover $\sK$ with closed balls $B[\nu,\eta]$ of small radius and our comparisons will take place on $\sK\cap B[\nu,\eta].$

\par To formalize the previous idea, let $\eta>0$ be arbitrary. Since $\sK$ is compact, it is possible to find $M_\eta \in \N$ and a collection $\{\nu_{i,\eta} \in \sK\,:\,1\leq i \leq M_\eta\}$ such that
\begin{equation*}
\sK \subseteq \bigcup_{i=1}^{M_\eta}B[\nu_{i,\eta}, \eta].
\end{equation*}
In particular,
\begin{equation}
Q_N(L_N \in \sK)\leq \sum_{i=1}^{M_\eta}Q_N(L_N \in \sK\cap B[\nu_{i,\eta},\eta]).
\end{equation}

\par To take the logarithm in the bound above, we will use the following well-known fact. If $(A_{N})_{N \in \N}$ and $(B_{N})_{N \in \N}$ are sequences of positive real numbers, then
\begin{equation*}
\limsup_{N\to \infty}\frac{1}{N} \log (A_{N}+B_{N}) \leq \max \bigg\{ \limsup_{N\to\infty} \frac{1}{N}\log A_{N}, \limsup_{N\to\infty} \frac{1}{N} \log B_{N} \bigg\}.
\end{equation*}
By directly applying this estimate, we obtain
\begin{multline}\label{eq:limsup_bound}
\limsup_{N\to \infty} \frac{1}{N} \log Q_{N}( L_{N} \in \sK) \leq \\
\max_{1\leq i \leq M_\eta} \limsup_{N\to\infty} \frac{1}{N} \log Q_{N}( L_{N} \in \sK \cap B[\nu_{i,\eta},\eta]).
\end{multline}

\par We now proceed to estimate the RHS of~\eqref{eq:limsup_bound}. Using H\"older's Inequality, for any $b \in (1,\infty)$, 
\begin{multline*}
Q_{N}( L_{N} \in \sK \cap B[\nu_{i,\eta},\eta])=\int_{ \{L_{N} \in \sK \cap B[\nu_{i,\eta},\eta]\}}\dfrac{\d Q_N}{\d Q_{\nu_{i,\eta}}^{\otimes N}} \, \d Q_{\nu_{i,\eta}}^{\otimes N}\\
\leq Q_{\nu_{i,\eta}}^{\otimes N}(L_{N} \in \sK \cap B[\nu_{i,\eta},\eta])^{1-1/b}\bigg[\int_{ \{L_{N} \in \sK \cap B[\nu_{i,\eta},\eta]\}}\bigg(\dfrac{\d Q_N}{\d Q_{\nu_{i,\eta}}^{\otimes N}}\bigg)^b \, \d Q_{\nu_{i,\eta}}^{\otimes N}\bigg]^{1/b}.
\end{multline*}

\par Imposing that $\eta \in (0,1)$, Item~\ref{item:RN2nd} of Lemma~\ref{lemma:estimates} (see Equation~\eqref{eq:l52-3}) says that there exists a constant $C=C(\pnorm{\text{SL}}{f},h_*)$ such that
\[\int_{ \{L_{N} \in \sK \cap B[\nu_{i,\eta},\eta]\}}\bigg(\dfrac{\d Q_N}{\d Q_{\nu_{i,\eta}}^{\otimes N}}\bigg)^b \, \d Q_{\nu_{i,\eta}}^{\otimes N}\leq \exp\left\{NC(b^2+b)\sqrt{\eta}\right\}.\]
Therefore,
\begin{multline*}
\dfrac{1}{N}\log Q_{N}( L_{N} \in \sK \cap B[\nu_{i,\eta},\eta])\leq (1-1/b)\dfrac{1}{N}\log Q_{\nu_{i,\eta}}^{\otimes N}(L_{N} \in \sK \cap B[\nu_{i,\eta},\eta])\\
+C(b+1)\sqrt{\eta}.
\end{multline*}
Taking the superior limit as $N\to\infty$ and the bound we already have from the LDP of $Q_{\nu_{i\,\eta}}^{\otimes N}(L_N\in \cdot\,)$ with rate function $H_{\nu_{i,\eta}}$ we obtain
\begin{multline*}
\limsup_{N\to\infty}\dfrac{1}{N}\log Q_{N}( L_{N} \in \sK \cap B[\nu_{i,\eta},\eta])\leq \\
(1-1/b)\big(-\inf\{H_{\nu_{i,\eta}}(\mu)\,:\,\mu \in \sK \cap B[\nu_{i,\eta},\eta]\}\big)+C(b+1)\sqrt{\eta}.
\end{multline*}

\par The inequality above together with~\eqref{eq:limsup_bound} yields
\begin{multline}\label{eq:loc_ap_1}
\limsup_{N\to\infty}\frac{1}{N} \log Q_{N}( L_{N} \in \sK) \leq \\
(1-1/b)\left(- \min_{1\leq i\leq M_\eta}\inf\left\{H_{\nu_{i,\eta}}(\mu)\,:\mu \in \sK \cap B[\nu_{i,\eta},\eta]\right\}\right)\\
+C(b+1)\sqrt{\eta}.
\end{multline}

\par We now examine the limit of the quantity above as $\eta \to 0$. Let
\begin{equation*}
F(\eta) = \min_{1\leq i\leq M_\eta}\inf\left\{H_{\nu_{i,\eta}}(\mu)\,:\mu \in \sK \cap B[\nu_{i,\eta},\eta]\right\}.
\end{equation*}
Our goal is to prove that
\begin{equation}\label{eq:limit_F}
\lim_{\eta \to 0} F(\eta) = \inf_{\mu \in \sK}H(\mu).
\end{equation}

Once this is done, by taking the limit $\eta \to 0$ in~\eqref{eq:loc_ap_1}, we obtain
\begin{equation}
\limsup_{N\to\infty}\frac{1}{N} \log Q_{N}( L_{N} \in \sK) \leq (1-1/b)\left(- \inf_{\mu \in \sK} H(\mu)\right).
\end{equation}
Now we let $b \to \infty$ to conclude the proof.

Let us now focus on the proof of~\eqref{eq:limit_F}. First, notice that, in view of Remark~\ref{remark:infinite_entropy}, the above holds trivially if $\inf_{\mu \in \sK}I(\mu|P) = \infty$, since both sides of the equation are infinite.

We now assume that $\inf_{\mu \in \sK}I(\mu|P)$ is finite. In this case, $G:=\inf_{\mu \in \sK}H(\mu)$ is finite and $F(\eta)$ is bounded from above by $2\inf_{\mu \in \sK}I(\mu|P)+C$, due to Item \ref{prop:gamma_finite} of Lemma~\ref{lemma:rate_function}.

For each $L > 2\inf_{\mu \in \sK}I(\mu|P)+C$, define the auxiliary functions
\begin{equation*}
F_{L}(\eta) = \min_{1\leq i\leq M_\eta}\inf\left\{H_{\nu_{i,\eta}}(\mu)\,:\mu \in \sK \cap B[\nu_{i,\eta},\eta] \text{ and } I(\mu|P) \leq L\right\}
\end{equation*}
and the quantity
\begin{equation*}
G_{L} = \inf\left\{H(\mu):\mu \in \sK \text{ and } I(\mu|P) \leq L\right\}.
\end{equation*}
Notice that we have the bounds $F \leq F_{L}$ and $G \leq G_{L}$, for all $L$, since we are taking the infimum over smaller sets. Furthermore, we will prove that there exists $L'$ such that $F_{L'}=F$ and $G=G_{L'}$.

Let us first assume that $F_{L'}=F$ and $G_{L'}=G$, and conclude~\eqref{eq:limit_F}. By applying Item~\ref{lemma:perturbation} of Lemma~\ref{lemma:rate_function}, we immediately obtain
\begin{equation*}
\begin{split}
F(\eta) & = F_{L'}(\eta) = \min_{1\leq i\leq M_\eta}\inf\left\{H_{\nu_{i,\eta}}(\mu)\,:\mu \in \sK \cap B[\nu_{i,\eta},\eta] \text{ and } I(\mu|P) \leq L' \right\} \\
& \leq \min_{1\leq i\leq M_\eta}\inf\left\{H(\mu)+c(L'+1)\eta\,:\mu \in \sK \cap B[\nu_{i,\eta},\eta] \text{ and } I(\mu|P) \leq L'\right\} \\
& = \inf\left\{H(\mu):\mu \in \sK \text{ and } I(\mu|P) \leq L'\right\}+c(L'+1)\eta \\
& = G_{L'}+c(L'+1)\eta \\
& = \inf_{\mu \in \sK}H(\mu)+c(L'+1)\eta.
\end{split}
\end{equation*}
In an analogous way, one obtains
\begin{equation}
F(\eta) \geq \inf_{\mu \in \sK}H(\mu)-c(L'+1)\eta.
\end{equation}
From these two estimates, we deduce that
\begin{equation*}
\lim_{\eta \to 0} F(\eta) = \inf_{\mu \in \sK}H(\mu).
\end{equation*}

To conclude, we only have to prove that $F=F_{L'}$ and $G=G_{L'}$, for some $L'$ large enough. We focus on $F_{L}$, the proof of the statement for $G_{L}$ follows the same lines. For each $\eta \in (0,1]$, whenever $\mu \in \sK \cap B[\nu_{i,\eta}, \eta]$  is such that
\begin{equation*}
H_{\nu_{i,\eta}}(\mu) \leq F(\eta)+1,
\end{equation*}
Estimate~\eqref{eq:bound_gamma} implies that
\begin{equation*}
(1-\delta)I(\mu|P)-c \leq H_{\nu_{i,\eta}}(\mu) \leq F(\eta)+1 \leq 2\inf_{\tilde{\mu} \in \sK}I(\tilde{\mu}|P)+C+1,
\end{equation*}
which gives
\begin{equation*}
I(\mu|P) \leq \frac{1}{1-\delta}\left(2\inf_{\tilde\mu \in \sK} I(\tilde\mu|P)+C+1+c\right).
\end{equation*}
This implies $F=F_{L'}$, for any
\begin{equation*}
L' \geq \frac{1}{1-\delta}\left(2\inf_{\tilde\mu \in \sK} I(\tilde\mu|P)+C+1+c\right),
\end{equation*}
and concludes the proof.

\subsection{Exponential tightness}
\label{sec:exptight}
~
\par The goal of this subsection is to prove exponential tightness for the sequence $Q_N(L_N \in \cdot\,)$, that is, we want to prove that, for any $M>0$, there exists a compact set  $\sK_M\subset\sM_1\big(\sC_{-\tau}^{T} \times \R^{d}\big)$ such that
\begin{equation}\label{goal:exptight}
\limsup_{N\to\infty}\frac{1}{N} \log Q_N(L_{N} \notin \sK_{M})\leq -M.
\end{equation}

The proof will rely on the fact that $P^{\otimes N}(L_N\in \cdot\,)$ is exponentially tight and that we can relate this sequence with $Q_N(L_N \in \cdot\,)$ via Radon-Nikodym derivatives and Lemma~\ref{lemma:estimates}.

Let $M>0$ be fixed. From~\cite[Exercise 1.2.19]{dembo2009large}, there exists a compact $\sK_M\subset\sM_{1}\left(\sC_{-\tau}^{T} \times \R^{d}\right)$ such that
\begin{equation}\label{P:exptight}
\limsup_{N\to\infty}\frac{1}{N} \log P^{\otimes N}(L_{N} \notin \sK_{M})\leq -M.
\end{equation}

On other hand,
\begin{align*}
Q_N(L_{N} \notin \sK_{M})=&\int_{\{L_{N} \notin \sK_{M}\}}
\dfrac{\d Q_N}{\d P^{\otimes N}}(\vec{x}_T,\vec{\omega}) \, \d P^{\otimes N}(\vec{x}_T,\vec{\omega}).
\end{align*}
By Cauchy-Schwartz Inequality and Item~\ref{estimate:exptight} of Lemma~\ref{lemma:estimates} (see Equation~\eqref{eq:l52-3}), there exists a constant $C=C(h_*,\supnorm{f},T)$ such that
\begin{align*}	
Q_{N}(L_{N} \notin \sK_{M}) & \leq P^{\otimes N}(L_{N} \notin \sK_{M})^{\frac{1}{2}}\bigg[\int \bigg(\dfrac{\d Q_N}{\d P^{\otimes N}}\bigg)^2\, \d P^{\otimes N}\bigg]^{\frac{1}{2}}\\
&\leq P^{\otimes N}(L_{N} \notin \sK_{M})^{\frac{1}{2}}\exp\Big\{NC(4-2)\Big\}^{1/2}.
\end{align*}
As an immediate consequence of~\eqref{P:exptight},
\begin{equation*}
\limsup_{N\to \infty}\frac{1}{N}\log Q_{N}(L_{N} \notin \sK_{M}) \leq -\frac{M}{2}+C,
\end{equation*}
which is enough to conclude, since the last bound goes to $-\infty$ as $M\to\infty.$

\section{The minimizer of $H$}\label{sec:minimizer}
~
\par In this section, we prove Theorem~\ref{thm:minimizer}. Existence and uniqueness are proved in the next subsection. We verify the relation with the path-dependent McKean-Vlasov PDE in Subsection \ref{sec:pde}.

\subsection{Existence and uniqueness}
~
\par In this section we prove that $H$ has a unique minimizer. The main idea is that we can translate the problem of finding minimizers of $H$ to the problem of finding fixed points of the map $\mu\to Q_\mu$.

\par Since $H$ is a good rate function, it admits at least one minimizer $\mu$. Applying Definition~\ref{def:LDP} for the full space $\sM_1(\sC_{-\tau}^T\times \R^d)$, one can easily see that $\inf_\nu H(\nu)=0$. Therefore, any minimizer $\mu$ of $H$ satisfies $H(\mu)=0$. It remains to verify that this minimizer is unique.

\par In view of the characterization provided by Theorem~\ref{t:ldp} and Lemma~\ref{lemma:rate_function}, Item~\ref{lemma:ldp_perturbation}, 
\begin{equation}\label{eq:rememberH}
H(\mu)=H_\mu(\mu)=I(\mu|Q_{\mu}),
\end{equation}
any minimizer $\mu$ of $H$ satisfies $\mu = Q_{\mu}$. If we denote by $\sQ$ the map $\mu\mapsto Q_\mu$ defined in Definition~\ref{def:decoupling}, each minimizer of $H$ corresponds to a fixed point of the map $\sQ$. Our new goal is the following. 
\begin{center}
\textbf{Goal:} The map $\sQ$ has a unique fixed point. 
\end{center}

\par If $\sQ$ were a contraction, we would be done since contraction maps have a unique fixed point. This is not the case, but we will prove that there is $m\geq 1$ such that the composition $\sQ^m$ is a contraction. This concludes the claim, since any fixed point of $\sQ$ is also a fixed point of $\sQ^m$. Therefore we just need to prove the following.
\begin{center}
\textbf{Goal:} For some $m\geq 1$, the map $\sQ^m$ is a contraction. 
\end{center}

\par Assume for a moment that there exists a constant $C$ depending on $f$, $h$ and $T$ such that, for all $t \in [0,T]$,
\begin{equation}\label{eq:gronwall}
\d^t_{\text{BL}}(\sQ(\mu),\sQ(\nu))\leq C\int_0^t\d^s_{\text{BL}}(\mu,\nu)\,\d s.
\end{equation}
Iterating this bound, we obtain, for $n\geq 2$,
\begin{equation}
\begin{split}
\d^{T}_{\text{BL}}(\sQ^n(\mu), \sQ^n(\nu)) & \leq C \int_{0}^{T} \d^{s_{1}}_{\text{BL}}(\sQ^{n-1}(\mu), \sQ^{n-1}(\mu)) \,\d s_{1} \\
& \leq C^{n}\int_{0}^{T}\int_{0}^{s_{1}} \cdots \int_{0}^{s_{n-1}}\d_{\text{BL}}^{s_{n}}(\mu, \nu) \,\d s_{n} \dots \d s_{1}.
\end{split}
\end{equation}
Using the bound $\d_{\text{BL}}^{s_{n}}(\mu, \nu)\leq \d_{\text{BL}}^{T}(\mu, \nu)$, we get
\begin{equation}
\d^{T}_{\text{BL}}(\sQ^n(\mu), \sQ^n(\nu))) \leq \frac{(CT)^{n}}{n!}d^{T}_{\text{BL}}(\mu, \nu).
\end{equation}
Choosing $m$ such that $(CT)^{m}/m!<1$, we obtain that the map $\sQ^{m}$ is a contraction and conclude the proof.

\par We finish this section verifying~\eqref{eq:gronwall}, concluding the proof of the first part of Theorem~\ref{thm:minimizer}.

\par In order to prove~\eqref{eq:gronwall}, fix two measures $\mu,\nu\in \sM_1(\sC_{-\tau}^T\times \R^d).$ Construct $\psi^{\omega,\mu}$ and $\psi^{\omega,\nu}$ on the same probability space, by coupling the following variables to be equal: the Brownian motions $B$, the initial conditions $\xi$, and the media variables $\omega$. Under this construction, the laws of $\psi^{\omega,\mu}$ and $\psi^{\omega,\nu}$ are $\sQ(\mu)$ and $\sQ(\nu),$ respectively, and as consequence of \eqref{eq:boundonBL}
\begin{equation}
\d^t_{\text{BL}}(\sQ(\mu),\sQ(\nu))\leq \Ex{\pnorm{[-\tau,t]}{\psi^{\omega,\mu}_t-\psi^{\omega,\nu}_t}}.
\end{equation}
To estimate the RHS of the previous inequality, recall from~\eqref{eq:intdiff-nu} that, almost surely, for all $r \in [0, t]$,
\begin{equation*}
\psi^{\omega,\mu}(r)=\xi(0)+\int_0^r f\big(s, \psi^{\omega, \nu}, \mu, \omega\big)\,\d s+ h\big(\omega\big)B(r)
\end{equation*}
and a similar expression holds for $\psi^{\omega,\nu}$ with the same Brownian motion, the same initial condition, and the same media variables. Subtracting the expressions, we obtain, almost surely, for all $r \in [0,t]$,
\begin{equation*}
\psi^{\omega,\mu}(r)-\psi^{\omega,\nu}(r)=\int_0^r\left[f\big(s, \psi^{\omega, \mu}, \mu, \omega\big)-f\big(s, \psi^{\omega, \nu}, \nu, \omega\big)\right]\,\d s.
\end{equation*}
From the assumptions on $f$ (cf. Section~\ref{subsec:assumptions}), we immediately have, for almost all realizations,
\begin{equation*}
|\psi^{\omega,\mu}(r)-\psi^{\omega,\nu}(r)|\leq \pnorm{\text{SL}}{f}\left(\int_0^r\pnorm{[-\tau,s]}{\psi^{\omega,\mu}_s-\psi^{\omega,\nu}_s}\,\d s + \int_0^r\d^s_{\text{BL}}(\mu,\nu)\,\d s\right).
\end{equation*}
Recall that $\psi^{\omega,\mu}(r)=\psi^{\omega,\nu}(r)$, for all $r \in [-\tau,0]$. Taking the supremum in $r \in [-\tau,t]$, one obtains
\begin{equation*}
\pnorm{[-\tau,t]}{\psi^{\omega,\mu}_t-\psi^{\omega,\nu}_t}\leq \pnorm{\text{SL}}{f}\left(\int_0^t\pnorm{[-\tau,s]}{\psi^{\omega,\mu}_s-\psi^{\omega,\nu}_s}\,\d s + \int_0^t\d^s_{\text{BL}}(\mu,\nu)\,\d s\right).
\end{equation*}
Gronwall's inequality says that if $u(t)$ and $\alpha(t)$ are continuous with $\alpha$ non-decreasing, then
\begin{equation}
u(t)\leq \alpha(t)+\beta\int_0^tu(s)\,\d s \implies u(t)\leq e^{\beta t}\alpha(t).
\end{equation}
From this, we obtain 
\begin{equation*}
\pnorm{[-\tau,t]}{\psi^{\omega,\mu}_t-\psi^{\omega,\nu}_t}\leq e^{\pnorm{\text{SL}}{f} T}\pnorm{\text{SL}}{f}\int_0^t\d^s_{\text{BL}}(\mu,\nu)\,\d s,
\end{equation*}
holds almost surely. Taking expectation, we conclude the proof of~\eqref{eq:gronwall}.

\subsection{Solving the PDE}\label{sec:pde}
~
\par We dedicate this subsection to complement Theorem \ref{thm:minimizer}, showing that $\nu^{*,\omega}$ satisfies a family of coupled path-dependent McKean-Vlasov PDEs. The idea is to use It\^o's Formula for a test function to see that $\nu^{*,\omega}$ satisfies a PDE in the weak sense.

Recall first that $\nu^*=Q_{\nu^*}$ is the annealed law of $V^{\omega}$ that satisfies
\begin{multline*}
\begin{cases} \d V^{\omega}(t) =f\big(t, V^{\omega}, \nu^*, \omega\big)\,\d t+ h\big(\omega\big)\,\d B(t), & 0 \leq t \leq T, \\
V^{\omega}_{0}=\xi_{0}.
\end{cases}
\end{multline*}
\ignore{ This implies that $\nu^{*,\omega}\sim V^{\omega}$ is absolutely continuous with respect to $P^\omega$ which is the law of a Brownian motion reparametrized by time. Therefore, the law of the marginal at time $t\in [0,T]$ satisfies $\nu^{*,\omega}(t)(\d x)=\rho^\omega(t,x)\d x$, that is, $\nu^{*,\omega}(t)$ has a density  with respect to the Lebesgue measure for almost all $\omega$.}

By It\^o's Formula \cite[Theorem~3.3.3]{karatzas}, for any function $\phi:\R\to\R$ with bounded continuous derivatives up order $2$, it holds that
\begin{align*}
\phi(V^\omega(t))-\phi(V^{\omega}(0))=&\int_0^t\phi'(V^{\omega}(s))f(s,V^{*,\omega},\nu^*,\omega) \, \d s\\
&+\int_0^t\phi'(V^{\omega}(s)) \, \d B(s)+\frac{h(\omega)^2}{2}\int_0^t\phi''(V^\omega(s)) \, \d s.
\end{align*}
We write $\nu^{*,\omega}(t)$ for the law of $V^{\omega}(t)$ for $t\in [0,T]$ and by $\nu^{*,\omega}_t$ the law of the path $V^\omega_t\in \sC_{-\tau}^t$. Since $\phi$ and its two derivatives are bounded, we can take expectation with respect to the Brownian motions to obtain that
\begin{align}\label{weak}
\int_{\R}\phi(u)(\nu^{*,\omega}(t)-\nu^{*,\omega}(0))(\d u)=&
\int_0^t\int_{\sC_{-\tau}^T}\phi'(x(s))f(s,x_s,\nu^*,\omega)\nu^{*,\omega}(\d x_T) \, \d s\nonumber\\
&+\frac{h(\omega)^2}{2}\int_0^t\int_\R\phi''(x(s))\nu^{*,\omega}(\d x_T) \, \d s.
\end{align}

With this, we define the operator $L_{\nu^{*,\omega}}$ via
\[L_{\nu^{*,\omega}}(\phi)(t,x_T)=f(t,x_t,\nu^*,\omega)\phi'(x(t))+\frac{h(\omega)^2}{2}\phi''(x(t))\]
to rewrite \eqref{weak} as
\begin{align*}
\int_{\R}\phi(u)(\nu^{*,\omega}(t)-\nu^{*,\omega}(0))(\d u)=&
\int_0^t\int_{\sC_{-\tau}^T}L_{\nu^{*,\omega}}(\phi)(s,x_T)\nu^{*,\omega}(\d x_T) \, \d s.
\end{align*}

\appendix

\section{The Radon-Nikodym derivative}\label{app:radon-nikodym}
~
\par Here we prove the claims about the Radon-Nikodym derivative of the solution of the system with respect to $P$ (see Subsection~\ref{sec:heuristics}).

\par Recall from~\eqref{def:D} that
\begin{equation*}
D^{\nu,\omega}(y_{T}):=\frac{1}{h(\omega)^{2}}\int_{0}^{T}f(t,y_{t}, \nu, \omega)\,\d y(t)- \frac{1}{2h(\omega)^{2}}\int_{0}^{T}f(t, y_{t}, \nu, \omega)^{2} \,\d t.
\end{equation*}

\par For each fixed vector $\vec{\omega} \in \R^{Nd}$ and collection of initial conditions $(\xi_{0}^{i})_{i \in [N]}$, define
\begin{multline*}
Z(t) = \exp \left\{ -\sum_{i=1}^{N}\frac{1}{h(\omega^{i})}  \int_0^t f(s, \theta^{i,\omega}, L_N, \omega^{i}) \, \d B^{i}(s) \right. \\
 \left.-\sum_{i=1}^{N} \frac{1}{2 h(\omega^{i})^2} \int_0^t f^2(s, \theta^{i, \omega}, L_N, \omega^{i}) \, \d s \right\}.
\end{multline*}
Since $f$ is bounded, Novikov's Condition implies that the process above is a martingale.

\par If we define
\begin{equation*}
\d \hat{P}:=Z(T) \, \d \mathcal{W},
\end{equation*}
then, by Girsanov's Theorem, $\left(\dfrac{\theta^{i,\omega}(t)-\theta^{i, \omega}(0)}{h(\omega^{i})}\right)_{i \in [N]}$ is an $N$-dimensional Brownian motion with respect to $\hat{P}$. Since $\theta$ satisfies the system of SDEs in Equation~\ref{eq:intdiff}, we can rewrite
\begin{multline*}
Z(t) = \exp \left\{ -\sum_{i=1}^{N}\frac{1}{h(\omega^{i})^{2}}  \int_0^t f(s, \theta^{i,\omega}, L_N, \omega^{i}) \, \d \theta^{i, \omega}(s) \right. \\
 \left.+\sum_{i=1}^{N} \frac{1}{2 h(\omega^{i})^2} \int_0^t f^2(s, \theta^{i, \omega}, L_N, \omega^{i}) \, \d s \right\}.
\end{multline*}

Let $A$ be a measurable subset of $\sC_0^{T}$. By definition,
\begin{equation} \label{eq:P-omega-esperanca}
Q^{\omega}(A) = \int \mathcal{W}(\theta^{\omega} \in A) \,\d \mu_{0}^{\otimes N} = \int E_\mathcal{W}[1_A(\theta^{\omega})] \, \d \mu_{0}^{\otimes N},
\end{equation}
where the integration above is with respect to the initial condition.

Using the Radon-Nikodym derivative of Equation~\ref{eq:P-omega-esperanca} we get
\begin{align*}
E_\mathcal{W}[1_A(\theta^{\omega})] = E_{\hat{P}} [1_A(\theta^{\omega}) Z_T^{-1}] & \\
= E_{\hat{P}} \Bigg[ 1_A(\theta^{\omega})  \exp & \left\{ -\sum_{i=1}^{N}\frac{1}{h(\omega^{i})^{2}}  \int_0^t f(s, \theta^{i,\omega}, L_N, \omega^{i}) \, \d \theta^{i, \omega}(s) \right. \\
& \left.\left.+\sum_{i=1}^{N} \frac{1}{2 h(\omega^{i})^2} \int_0^t f^2(s, \theta^{i, \omega}, L_N, \omega^{i}) \, \d s \right\}\right].
\end{align*}

Since $\left(\dfrac{\theta^{i,\omega}(t)-\theta^{i, \omega}(0)}{h(\omega^{i})}\right)_{i \in [N]}$ is a Brownian motion with respect to $\hat{P}$, averaging $\hat{P}$ with respect to $\mu_{0}^{\otimes N}$ gives the law $\prod_{i=1}^{N} P^{\omega^{i}}$. In particular, integrating with respect to $\mu_{\rm med}^{\otimes N}$, we obtain
\begin{equation*}
\frac{\d Q_N}{\d P^{\otimes N}}\big(\vec{x}_{T}, \vec{\omega}\big)=\exp\Big(N\int D^{L_N,\omega}\big(y_{T}\big) \, L_N( \d y_{T}, \d \omega) \Big),
\end{equation*}
and conclude~\eqref{eq:dQN/dP}. The verification of~\eqref{eq:dQnu/dP} is similar and we omit it here.

\bibliographystyle{plain}
\bibliography{bibliography}

\end{document}